\documentclass[12pt, reqno]{amsart}
\usepackage{amsmath, amsthm, amscd, amsfonts, amssymb, graphicx, color}
\usepackage[bookmarksnumbered, colorlinks, plainpages]{hyperref}
\input{mathrsfs.sty}

\textheight 22.5truecm \textwidth 15truecm
\setlength{\oddsidemargin}{0.35in}\setlength{\evensidemargin}{0.35in}

\setlength{\topmargin}{-.5cm}

\newtheorem{theorem}{Theorem}[section]
\newtheorem{lemma}[theorem]{Lemma}
\newtheorem{proposition}[theorem]{Proposition}
\newtheorem{corollary}[theorem]{Corollary}
\theoremstyle{definition}

\newtheorem{example}[theorem]{Example}

\theoremstyle{remark}
\newtheorem{remark}[theorem]{Remark}

\numberwithin{equation}{section}

\begin{document}

\title[Operator Ky Fan type inequalities]{Operator Ky Fan type inequalities}

\author[S. Habibzadeh, J. Rooin, M.S. Moslehian]{S. Habibzadeh$^1$, J. Rooin$^1$, \lowercase{and} M. S. Moslehian$^2$} 
\dedicatory{
Dedicated to the memory of Professor Ky Fan}
\address{$^{1}$ Department of Mathematics, Institute for Advanced Studies in Basic Sciences (IASBS), Zanjan 45137-66731, Iran}
\email{s.habibzadeh@iasbs.ac.ir}
\email{rooin@iasbs.ac.ir}

\address{$^{2}$ Department of Pure Mathematics, Ferdowsi University of Mashhad, P. O. Box 1159, Mashhad 91775, Iran}
\email{moslehian@um.ac.ir}

\subjclass[2010]{Primary 47A64, ; Secondary 47A63, 47B15, 47B07, 15A42.}

\keywords{ Ky Fan type inequalities; operator mean; operator monotone function; integral representation.}

\begin{abstract}
In this paper, we extend some significant Ky Fan type inequalities in a large setting to operators on Hilbert spaces and derive their equality conditions. Among other things, we prove that if $f:[0,\infty)\rightarrow[0,\infty)$ is an operator monotone function with $f (1) = 1$, $f'(1)=\mu$, and associated mean $\sigma$, then for all operators $A$ and $B$ on a complex Hilbert space $\mathscr{H}$ such that $0<A,B\leq\frac{1}{2}I$, we have
\begin{equation*}
A'\nabla_\mu B'-A'\sigma B'\leq A\nabla_\mu B-A\sigma B,
\end{equation*}
where $I$ is the identity operator on $\mathscr{H}$, $A':=I-A$, $B':=I-B$, and $\nabla_\mu$ is the $\mu$-weighted arithmetic mean.	
\end{abstract}
\maketitle

\section{Introduction}
Let $n\geq2$, and let $\mu_1,\ldots,\mu_n\geq0$
such that $\sum_{i=1}^n\mu_i=1$.
For arbitrary real numbers $x_1,\ldots,x_n>0$,
we denote by $A_n, G_n,$ and $H_n$ the arithmetic mean,
the geometric mean, and the harmonic mean of $x_1,\cdots,x_n$, respectively; that is,
\begin{equation*}\label{numer-1-mean}
A_n=\sum_{i=1}^n\mu_ix_i,\qquad\qquad G_n=\prod_{i=1}^n{x_i}^{\mu_i},\qquad\qquad
H_n=\frac{1}{\sum_{i=1}^n\mu_i\frac{1}{x_i}}.
\end{equation*}
For $x_i\in(0,\frac{1}{2}]$, we denote by $A'_n, G'_n,$ and $H'_n$ the arithmetic, geometric, and harmonic means of $x'_1:=1-x_1,\cdots,x'_n:=1-x_n$, respectively; that is,
\begin{equation*}\label{numer-2-mean}
A'_n=\sum_{i=1}^n\mu_ix'_i,\qquad\qquad G'_n=\prod_{i=1}^n{x'_i}^{\mu_i},\qquad\qquad
H'_n=\frac{1}{\sum_{i=1}^n\mu_i\frac{1}{x'_i}}.
\end{equation*}
The most important and elegant Ky Fan type inequalities can be divided in the following three classes:\\
$\bullet~ \textbf{Additive class:}$
\begin{align}
A'_n-G'_n&\leq A_n-G_n,\qquad\qquad\mbox{(Alzer)}\label{ig}\\
A'_n-H'_n&\leq A_n-H_n.\qquad\qquad\mbox{(Alzer)}\label{igg}
\end{align} 
$\bullet~ \textbf{Reciprocal additive class:}$ 
\begin{align}
\frac{1}{G'_n}-\frac{1}{A'_n}&\leq\frac{1}{G_n}-\frac{1}{A_n},\qquad\qquad\qquad\label{ik}\\
\frac{1}{H'_n}-\frac{1}{G'_n}&\leq\frac{1}{H_n}-\frac{1}{G_n},\label{ikk}\\
\frac{1}{H'_n}-\frac{1}{A'_n}&\leq\frac{1}{H_n}-\frac{1}{A_n}.\label{ikkk}
\end{align} 
$\bullet~ \textbf{Multiplicative class:}$
\begin{align}
\frac{A'_n}{G'_n}&\leq\frac{A_n}{G_n},\quad\qquad\qquad\qquad\mbox{(Ky Fan)}\label{ih}\\
\frac{G'_n}{H'_n}&\leq\frac{G_n}{H_n},\quad\qquad\qquad\mbox{(Wang-Wang)}\label{ihh}\\
\frac{A'_n}{H'_n}&\leq\frac{A_n}{H_n}.\label{ihhh}
\end{align}
If $\mu_1,\ldots,\mu_n>0$, then equality holds in each of the above inequalities if and only if $x_1=\cdots=x_n$.
It should be noted that, for any $n\geq2$, $G'_n-H'_n$ and $G_n-H_n$ are not comparable in general; see \cite{Alzer-T}. 
Clearly, $(\ref{ihhh})$ is a consequence of $(\ref{ih})$ and $(\ref{ihh})$. In addition, one can easily see that $(\ref{ik})$, $(\ref{ikk})$, and $(\ref{ikkk})$ are deduced from $(\ref{ih})$, $(\ref{ihh})$, and $(\ref{ihhh})$, respectively. Inequality $(\ref{ih})$ is called Ky Fan inequality. Several mathematicians obtained extensions, refinements, and various related results. For more information on Ky Fan type inequalities, see \cite{ DEG, MO, NS, JRA, JR}, also \cite{MM, RAA}, and the references therein.

Let $\mathbb{B}(\mathscr{H})$ be the
$C^{*}$-algebra of all bounded linear operators acting on a complex Hilbert space $(\mathscr{H}, \langle\cdotp,\cdotp\rangle)$, and let $I$ be the identity operator on $\mathscr{H}$. When $\dim\mathscr{H}= n$ is finite, we shall naturally identify $\mathbb{B}(\mathscr{H})$ with the algebra $\mathcal{M}_n(\mathbb{C})$ of all $n$ by $n$ complex matrices. Given an operator $A\in\mathbb{B}(\mathscr{H})$, the spectrum of $A$ is denoted
by ${\rm sp}(A)$.
An operator
$A$ is called positive if $\langle
Ax,x\rangle\geq0$ for all $x\in\mathscr{H}$, and then we
write $A\geq0$. We denote by $\mathbb{B}^+(\mathscr{H})$ the convex cone of all positive operators on $\mathscr{H}$.
If $A$ is positive and invertible, we write $A>0$. For self-adjoint operators $A,B\in\mathbb{B}(\mathscr{H})$, we write $A\leq B$ if $B-A\geq0$. This operator order is called the L\"owner order. \\
The monotonicity principle for operator functions, concluded from (continuous) functional calculus, states that if $A\in\mathbb{B}(\mathscr{H})$ is self-adjoint and $f, g$ are continuous real-valued functions on ${\rm sp}(A)$, then
\begin{equation*}\label{9}
f(t)\geq g(t)~~~(t\in {\rm sp}(A))\mbox{ if and only if }f(A)\geq g(A).
\end{equation*}
Moreover, $f(A) = g(A)$ if and only if $f(t) = g(t)$ for all $t\in {\rm sp}(A)$.\\ 
Inequalities for real numbers may have several plausible extensions to self-adjoint operators on Hilbert spaces. Only some of them turn out to be valid. Clearly, the monotonicity principle for operator functions allows us to extend some significant inequalities for real numbers to self-adjoint operators. If such an extension of an inequality for real numbers is not possible, one may wonder whether an eigenvalue inequality is true.\par
The axiomatic theory of connections and means for pairs of positive operators have been developed by Kubo and Ando \cite{ka}. A binary operation $\sigma$ defined on the set of positive operators is called a connection provided that
\begin{itemize}
\item [\rm{(M1)}]~ $A\leq B$ and $C\leq D$ imply that $A\sigma C\leq B\sigma D$;
\item [\rm{(M2)}]~ $A_n\downarrow A$ and $B_n\downarrow B$ imply that $A_n\sigma B_n\downarrow A\sigma B$, where $A_n\downarrow A$ means that $(A_n)$ is a decreasing sequence converging to $A$ in the strong operator topology on $\mathbb{B}(\mathscr{H})$.
\item [\rm{(M3)}]~ $C^*(A\sigma B)C\leq(C^*AC)\sigma(C^*BC)$ for every operator $C$.
\end{itemize}
A mean is a connection $\sigma$ satisfying the normalized condition $I\sigma I=I$. The class of Kubo--Ando means cover many well-known operator means. For $0\leq\mu\leq1$, the weighted arithmetic mean $\nabla_\mu$, the geometric mean $\sharp_\mu$, and the harmonic mean $!_\mu$
of positive invertible operators $A$ and $B$ are defined, respectively, by
\begin{eqnarray*}
&& A\nabla_\mu B:=(1-\mu)A+\mu B,\\
&& A\sharp_\mu B:= A^{\frac{1}{2}}\big(A^{-\frac{1}{2}}
B A^{-\frac{1}{2}}\big)^{\mu}A^{\frac{1}{2}},\\
&& A!_\mu B:=\Big((1-\mu)A^{-1}+\mu B^{-1}\Big)^{-1}.
\end{eqnarray*}
In the case when $\mu=\frac{1}{2}$, the usual arithmetic,
geometric, and harmonic means of $A$ and $B$ are simply denoted by
$A\nabla B$, $A\sharp B$, and $A!B$, respectively. 
A classical arithmetic-geometric-harmonic mean inequality with respect to the  L\"owner order is
\begin{equation*}
A!_\mu B\leq A\sharp_\mu B\leq A\nabla_\mu B; 
\end{equation*}
see \cite{MSM1} for some recent developments. \\
The aim of this paper is to generalize the classical Ky Fan type inequalities $(\ref{ig})\textendash(\ref{ihhh})$ for operators on Hilbert spaces and obtain their equality conditions in the case of $n=2$.\\
In section 2, given an arbitrary operator mean $\sigma$ with representing function $f$, we establish a very useful identity for the difference of $A\nabla_\mu B-A\sigma B$, where $\mu=f'(1)$.\\
In section 3, using this identity, we extend in a large setting the Ky Fan type inequalities (\ref{ig}) and (\ref{igg}) to operators on Hilbert spaces. Imposing the separability condition on $\mathscr{H}$ and compactness of $A-B$, we obtain extended eigenvalue versions of the other above listed Ky Fan type inequalities. Also, we drive equality conditions for all presented results. 

\section{Mean identities}
In this section, we present a significant mean identity which will be used in extending the listed Ky Fan type inequalities to operators on Hilbert spaces and discussing their equality conditions.\par
By an operator monotone function, we mean a continuous real-valued function $f$ defined on an interval $J \subseteq \mathbb{R}$ such that $A\geq B$ implies that $f (A)\geq f (B)$ for all self-adjoint operators $A$ and $B$ with spectra in $J$. Some structure theorems on operator monotone functions can be found in \cite{Bh, FMPS, ka}.\\
It is an important fact that there are one-to-one correspondences between the following objects which are explained in what follows. \\
$(1)$ operator connections on the set of all positive operators on $\mathscr{H}$,\\
$(2)$ operator monotone functions from $[0,\infty)$ to $[0,\infty)$,\\
$(3)$ finite (positive) Borel measures on $[0,1]$.
\begin{theorem}[\cite{FMPS}]\label{ll}
Given a connection $\sigma$, there is a unique operator monotone function $f:[0,\infty)\rightarrow[0,\infty)$ satisfying 
\begin{equation*}
f(t)I=I\sigma(tI),\qquad t\geq0.
\end{equation*}
Moreover, the map $\sigma\mapsto f$ is bijective, affine, and order-preserving.
\end{theorem}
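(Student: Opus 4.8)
The plan is to build the function $f$ from the action of $\sigma$ on scalar operators, show that this $f$ is forced to be operator monotone, and then exhibit an explicit inverse to the assignment $\sigma\mapsto f$; bijectivity will follow, and affinity together with order-preservation will fall out of the construction. First I fix $t\ge0$ and study $I\sigma(tI)$. For any unitary $U$, the transformer inequality (M3) applied once to $C=U$ and once to $C=U^*$, together with the fact that $I$ and $tI$ are unchanged by unitary conjugation, forces
\[
U^*\big(I\sigma(tI)\big)U=I\sigma(tI).
\]
Thus $I\sigma(tI)$ commutes with every unitary and so is a scalar multiple of $I$; I define $f(t)$ by $I\sigma(tI)=f(t)I$. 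Since a connection sends positive operators to positive operators, $f(t)\ge0$, so $f\colon[0,\infty)\to[0,\infty)$, and taking $A=B=I$ together with $C=sI\le tI=D$ in (M1) shows that $f$ is nondecreasing.

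Next I would upgrade (M3) to an equality. Applying it to an invertible $C$ and then to $C^{-1}$ yields the congruence identity
\[
C^*(A\sigma B)C=(C^*AC)\sigma(C^*BC)\qquad(C\text{ invertible}).
\]
Choosing $C=A^{1/2}$ for an invertible $A>0$ then gives
\[
A\sigma B=A^{1/2}\Big(I\sigma\big(A^{-1/2}BA^{-1/2}\big)\Big)A^{1/2}.
\]
The crux of the forward direction is to identify $I\sigma X$ with the functional calculus $f(X)$ for every positive $X$: I would first treat operators $X$ with finite spectrum by means of their spectral projections and the unitary invariance established above, and then pass to general $X$ by spectral approximation and the upper-semicontinuity axiom (M2). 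This produces the representation
\[
A\sigma B=A^{1/2}f\big(A^{-1/2}BA^{-1/2}\big)A^{1/2},
\]
which shows at once that $\sigma$ is completely determined by $f$ (injectivity) and, on setting $A=I$ and invoking monotonicity of $\sigma$ in its second slot, that $X\le Y$ implies $f(X)\le f(Y)$; that is, $f$ is operator monotone.

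For surjectivity I would reverse the construction: given an operator monotone $f\colon[0,\infty)\to[0,\infty)$, define $A\sigma B$ by the displayed congruence formula for $A>0$ and extend to all positive $A$ through (M2) by letting $A+\tfrac1nI\downarrow A$. Checking that the resulting $\sigma$ obeys (M1)--(M3) is where the genuine difficulty lies. Here I would invoke the integral representation of operator monotone functions, writing
\[
f(t)=\int_{[0,1]}\frac{t}{(1-\lambda)t+\lambda}\,d\mu(\lambda)
\]
for a finite positive Borel measure $\mu$ on $[0,1]$, where each integrand is the elementary weighted harmonic mean of $1$ and $t$ and manifestly satisfies all three axioms; the axioms then survive the integration against $\mu$. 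This integral representation is the analytic heart of the argument and the step I expect to be the main obstacle, since it is exactly the deep input (Löwner-type theory) that cannot be obtained by the elementary manipulations used elsewhere.

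Finally, affinity of $\sigma\mapsto f$ follows by evaluating $I\sigma(tI)$ on a combination $\alpha\sigma_1+\beta\sigma_2$, which gives $\alpha f_1+\beta f_2$; and order-preservation follows because $A\sigma_1B\le A\sigma_2B$ for all positive $A,B$ is equivalent, via the congruence formula specialized to $A=I$ and $B=tI$, to $f_1\le f_2$. Combining injectivity, surjectivity, affinity, and order-preservation completes the proof.
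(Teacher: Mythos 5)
The paper does not prove this statement at all: it is imported from the literature as a citation (this is the Kubo--Ando correspondence, quoted from \cite{FMPS}), so there is no in-paper argument to compare yours against. Your outline follows the standard Kubo--Ando proof, and its architecture is sound: scalarity of $I\sigma(tI)$ via unitary invariance extracted from (M3), the upgrade of (M3) to a congruence equality for invertible $C$, the representation $A\sigma B=A^{1/2}f(A^{-1/2}BA^{-1/2})A^{1/2}$, and surjectivity via the integral representation $f(t)=\int_{[0,1]}1!_\lambda t\,dm(\lambda)$, which is exactly Theorem \ref{l} of the paper. Two steps you defer are genuinely nontrivial and would have to be written out before this counts as a proof. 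First, the identification $I\sigma X=f(X)$ for $X$ with finite spectrum requires more than the unitary invariance you established: unitary invariance (applied to the symmetries $I-2P_i$) only shows that $I\sigma X$ commutes with the spectral projections of $X$; identifying the scalar on each spectral subspace needs the transformer inequality for the non-invertible $C=P_i$ --- where no equality upgrade is available --- combined with (M1) and (M2), and you also need continuity of $f$ before $f(X)$ even makes sense, which on $(0,\infty)$ comes for free from monotonicity and concavity but at the endpoint $0$ must be extracted from (M2). Second, in the surjectivity step the axiom (M2) does not simply ``survive the integration'': one must justify interchanging the strong limit $A_n\downarrow A$ with the integral in $\lambda$, e.g.\ via monotone convergence for the associated decreasing quadratic forms. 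Neither point is a wrong turn --- both are resolved in Kubo--Ando along the lines you indicate --- but as written the proposal is a correct roadmap rather than a complete argument, with the Löwner-type integral representation correctly identified as the deep external input.
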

The operator monotone function $f$ in the previous theorem is called the representing function of $\sigma$. When emphasizing is needed, we denote by $\sigma_f$ the associated operator mean with $f$.
The connection $\sigma$ associated with a nonnegative operator monotone function $f$ on $[0,\infty)$, is formulated by 
\begin{equation}\label{F}
A\sigma B = A^{\frac{1}{2}} f(A^{-\frac{1}{2}}BA^{-\frac{1}{2}})A^{\frac{1}{2}}
\end{equation}
for all positive invertible operators $A$ and $B$. \\
The operator monotone functions $1-\mu+\mu t$, $t^\mu$, and $(1-\mu+\mu t^{-1})^{-1}$ on $[0,\infty)$ with $0\leq\mu\leq1$, correspond to the weighted arithmetic, geometric, and harmonic means, $\nabla_\mu$, $\sharp_\mu$, and $!_\mu$, respectively.\\
Let $\sigma$ be an operator mean with representing function $f$. By the integral representation of $f$, see \cite{FMPS}, it is seen that if $f$ is non-linear, then $f''(t)<0$ for all $t>0$, and in particular, $f$ is strictly concave on $[0,\infty)$.
The operator means associated with the operator monotone functions $f(t^{-1})^{-1}$, $tf(t^{-1})$ 
and $tf(t)^{-1}$ 
are called the adjoint, the transpose and the dual of $\sigma$, and denoted by $\sigma^*$, $\sigma^0$ and $\sigma^\perp$, respectively; see \cite{FMPS}. Formula (\ref{F}) gives explicit forms to the adjoint, the transpose and the dual as follows:
\begin{equation*}
A\sigma^* B=(A^{-1}\sigma B^{-1})^{-1},\qquad A\sigma^0 B=B\sigma A,\qquad A\sigma^\perp B=(B^{-1}\sigma A^{-1})^{-1},
\end{equation*}
for all positive invertible operators $A$ and $B$.
It is easy to see that $\nabla_\mu^*=!_\mu$, $(\sharp_\mu)^*=\sharp_\mu$ and $\sigma^\perp=(\sigma^*)^0=(\sigma^0)^*$.\\
Since $t^{-\frac{1}{2}}f(t^{-1})^{-1}t^{-\frac{1}{2}}=(tf(t^{-1}))^{-1}(t> 0)$,
using functional calculus at the operator $T=A^{-\frac{1}{2}}BA^{-\frac{1}{2}}$ and considering (\ref{F}), we get 
\begin{equation}\label{21}
(A\sharp B)^{-1}(A\sigma^* B)(A\sharp B)^{-1}=(A\sigma^0 B)^{-1},
\end{equation}
for all positive invertible operators $A$ and $B$.
Therefore, if $\tau$ is another operator mean, then replacing $\sigma$ with $\tau^*$ and $\sigma^*$ in (\ref{21}), respectively, we get
\begin{equation*}
(A\sharp B)^{-1}(A\tau B-A\sigma B)(A\sharp B)^{-1}=(A\tau^\perp B)^{-1}-(A\sigma^\perp B)^{-1}.
\end{equation*}
In particular, if $0\leq \mu\leq 1$, replacing $\tau$ by $\nabla_{1-\mu}$ and $\sigma$ by $\sigma^0$, we get
\begin{equation}\label{20}
(A\sharp B)^{-1}(A\nabla_{1-\mu} B-A\sigma^0 B)(A\sharp B)^{-1}=(A!_\mu B)^{-1}-(A\sigma^* B)^{-1},
\end{equation}
while replacing $\tau$ by $!_\mu$ and $\sigma$ by $\sigma^*$, we get
\begin{equation}\label{211}
(A\sharp B)^{-1}(A\sigma^* B-A!_{\mu} B)(A\sharp B)^{-1}=(A\sigma^0 B)^{-1}-(A\nabla_{1-\mu} B)^{-1},
\end{equation}
for all positive invertible operators $A$ and $B$.
\begin{remark}\label{31}
Let $f:[0,\infty)\rightarrow[0,\infty)$ be an operator monotone function with $f (1) = 1$, $f'(1)=\mu$, and associated mean $\sigma$. We know that $0\leq \mu\leq1$. 
By the concavity of $f(t)$ and $tf(t)^{-1}$, we have
\begin{equation}\label{30}
(1-\mu+\mu t^{-1})^{-1}\leq f(t)\leq 1-\mu+\mu t,\qquad t>0,
\end{equation}
which yields 
\begin{equation}\label{23}
A!_\mu B\leq A\sigma B\leq A\nabla_\mu B,
\end{equation}
for all positive invertible operators $A$ and $B$. By (\ref{30}), if $\mu=0$ or $\mu=1$, then $f\equiv1$ or $f(t)=t~(t\geq 0)$, respectively. If $f$ is non-linear, then $f$ is strictly concave with $0<\mu<1$, which in particular yields that the right inequality in (\ref{30}) is strict, when $t\neq1$.
\end{remark}
\begin{theorem}[\cite{p}]\label{l}
Given a finite Borel measure $m$ on $[0,1]$, the function 
\begin{equation}\label{1}
f(t)=\int_{[0,1]}1!_\lambda t~dm(\lambda),\qquad t\geq 0,
\end{equation}
is an operator monotone function from $[0,\infty)$ to $[0,\infty)$. In fact, every operator monotone function from $[0,\infty)$ to $[0,\infty)$ arises in this form. Moreover, the map $m\mapsto f$ is bijective, affine, and order-preserving.
\end{theorem}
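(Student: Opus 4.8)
The plan is to prove the two directions separately: the forward implication is elementary, while L\"owner's theorem is the genuine input for the converse. For the forward direction I would first note that the integrand is explicit: by the defining formula for the harmonic mean, $1!_\lambda t = \big((1-\lambda)+\lambda t^{-1}\big)^{-1} = \frac{t}{(1-\lambda)t+\lambda}$, so for each fixed $\lambda\in[0,1]$ the function $g_\lambda(t):=1!_\lambda t$ is nonnegative and interpolates between the constant $1$ (at $\lambda=0$) and the identity (at $\lambda=1$). I would show that $g_\lambda$ is operator monotone by writing it as a composition of two order-reversing maps, namely $B\mapsto (1-\lambda)I+\lambda B^{-1}$ followed by $X\mapsto X^{-1}$, using that operator inversion reverses the L\"owner order. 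Since $g_\lambda(t)$ always lies between $\min(1,t)$ and $\max(1,t)$, the integral $f(t)=\int_{[0,1]}g_\lambda(t)\,dm(\lambda)$ is finite and nonnegative. Operator monotonicity of $f$ then follows by integrating the inequalities $g_\lambda(A)\geq g_\lambda(B)$ (valid for $A\geq B>0$ and each $\lambda$) against the positive measure $m$; passing the integral inside the functional calculus is justified by evaluating in the quadratic form $\langle\,\cdot\,x,x\rangle$ and applying Fubini to the spectral measure of $A$.

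The converse -- that \emph{every} operator monotone $f:[0,\infty)\to[0,\infty)$ has this form -- is the substantive part, and here I would invoke L\"owner's theorem. An operator monotone function on $(0,\infty)$ extends to a Pick (Nevanlinna) function, analytic on the upper half-plane with nonnegative imaginary part, hence admits a Nevanlinna integral representation. The hypothesis $f\geq0$ on $[0,\infty)$ forces the representing measure to be carried on $(-\infty,0]$; a change of variables converting the pole parameter $s$ into $\lambda\in[0,1]$ rewrites the Nevanlinna kernel exactly as $g_\lambda$, and produces the finite Borel measure $m$ on $[0,1]$, with the boundary atoms at $\lambda=0$ and $\lambda=1$ absorbing the constant and linear parts of the formula. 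Alternatively, one may argue through the Kubo--Ando correspondence of Theorem~\ref{ll} together with Choquet theory: the functions $g_\lambda$ are precisely the extreme points of the compact convex set of normalized operator monotone functions, so each such function is a barycenter $\int_{[0,1]} g_\lambda\,dm$.

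It remains to check that $m\mapsto f$ is bijective, affine, and order-preserving. Affinity is immediate, since $f$ depends linearly on $m$ through the integral. For order-preservation, if $m_2-m_1\geq0$ then $f_2-f_1=\int_{[0,1]}g_\lambda\,d(m_2-m_1)\geq0$ because $g_\lambda\geq0$; the reverse implication and the injectivity of $m\mapsto f$ both reduce to uniqueness of the representing measure, which I would obtain from the uniqueness in the Nevanlinna representation (equivalently, a Stieltjes inversion recovering $m$ from the boundary values of the analytic extension of $f$), while surjectivity is exactly the converse direction already established. The main obstacle is therefore concentrated entirely in this converse: everything else is either an algebraic manipulation of the explicit kernel $g_\lambda$ or a positivity argument, whereas the existence and uniqueness of $m$ rest on the full strength of L\"owner's theorem.
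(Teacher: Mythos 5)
The paper does not prove this statement at all: it is quoted as Theorem~\ref{l} with a citation to Chansangiam's paper on integral representations of operator monotone functions, so there is no in-paper argument to compare against. Judged on its own, your outline is correct and is essentially the standard proof, which is also how the cited source proceeds. The forward direction is as routine as you describe: $1!_\lambda t=\frac{t}{(1-\lambda)t+\lambda}$ is operator monotone as a composition of two order-reversing maps, the bound $\min(1,t)\le 1!_\lambda t\le\max(1,t)$ gives finiteness, and integrating the operator inequalities against the positive measure (with the quadratic-form/Fubini justification for exchanging the integral with the functional calculus) gives operator monotonicity of $f$. The converse is, as you say, where all the content lies: L\"owner's theorem yields the Pick/Nevanlinna representation with measure supported where $f$ fails to be analytic, i.e.\ on the negative reals, and the substitution $s=\lambda/(1-\lambda)$ turns the kernel $\frac{t}{t+s}$ into a positive multiple of $1!_\lambda t$, with the atoms at $\lambda=0$ and $\lambda=1$ carrying the constant and linear parts; uniqueness of the Nevanlinna measure (Stieltjes inversion) gives injectivity, and affinity and order-preservation are immediate from linearity and positivity of the kernel. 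One small caveat: the support statement for the representing measure comes from $f$ being operator monotone on all of $(0,\infty)$ (hence analytic there), while the nonnegativity and finiteness of $f$ at $0$ are what control the integrability near $s=0$ needed to make the transformed measure finite at $\lambda=0$; your phrasing slightly conflates these two roles, but this is a presentational point rather than a gap. The Choquet-theoretic alternative you mention is legitimate but not really independent, since identifying the $1!_\lambda$ as the extreme points is essentially equivalent to the representation theorem itself.
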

The measure $m$ in the previous theorem is called the associated measure of the operator monotone function $f$.
So, a connection is a mean if and only if its associated measure is a probability measure. \\
Considering (\ref{1}) and (\ref{F}), the connection associated with a finite Borel measure $m$ on $[0,1]$ is shown as 
\begin{equation*}
A\sigma B=\int_{[0,1]}A!_\lambda B~dm(\lambda)
\end{equation*}
for all positive operators $A$ and $B$.\\
The associated measure of the weighted harmonic mean $!_\mu$ is the Dirac measure $\delta_\mu$ at $\mu$. The weighted arithmetic mean $\nabla_\mu$ has the associated measure $(1-\mu)\delta_0+\mu\delta_1$.
It is known that, for each $0<\mu<1$,
\begin{equation*}
t^\mu=\frac{\sin(\mu\pi)}{\pi}\int_{0}^{\infty}\frac{t}{\lambda+t}\lambda^{\mu-1}d\lambda,
\end{equation*} 
where the integral is taken with respect to the Lebesgue measure; see e.g. \cite{Bh}. So, the associated measure of the weighted geometric mean $\sharp_\mu$ is
\begin{equation}\label{0}
dm(\lambda)=\frac{\sin(\mu\pi)}{\pi}\cdot\frac{1}{\lambda^{1-\mu}(1-\lambda)^\mu}d\lambda.
\end{equation}
The following theorem plays an essential role in obtaining some mean inequalities and investigating their equality conditions.
\begin{theorem}\label{cc}
Let $f:[0,\infty)\rightarrow[0,\infty)$ be an operator monotone function with $f (1) = 1$, and associated mean $\sigma_f$. Then there exists an operator mean $\tau$ such that for all positive invertible operators $A, B\in\mathbb{B}^+(\mathscr{H})$, 
\begin{align}
A\nabla_{f'(1)} B-A\sigma_f B=-\frac{f''(1)}{2}(A-B)(A\tau B)^{-1}(A-B).\label{th-ex-lem-f}
\end{align}
Moreover, if $f$ is non-linear, then $\tau$ is unique with the representing function
\begin{equation}\label{77}
g(t)=-\frac{f''(1)}{2}\frac{(t-1)^2}{1-f'(1)+f'(1) t-f(t)},\qquad t\geq 0.
\end{equation}
\end{theorem}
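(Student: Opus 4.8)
The plan is to reduce the operator identity to a scalar identity via the standard substitution $T := A^{-1/2}BA^{-1/2}$, and then to identify the representing function of the sought mean $\tau$ as the \emph{dual} of an explicitly constructed representing function, which will yield operator monotonicity essentially for free.

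Write $\mu := f'(1)$. Since $B = A^{1/2}TA^{1/2}$, formula (\ref{F}) gives $A\sigma_f B = A^{1/2}f(T)A^{1/2}$ and $A\nabla_\mu B = A^{1/2}(1-\mu+\mu T)A^{1/2}$, while $A-B = A^{1/2}(I-T)A^{1/2}$. Hence, setting $h(t) := 1-\mu+\mu t - f(t)$, both sides of (\ref{th-ex-lem-f}) are of the form $A^{1/2}(\,\cdot\,)A^{1/2}$, so by the monotonicity principle it suffices to exhibit an operator monotone $g$ with $g(1)=1$ satisfying the scalar identity $h(t) = -\frac{f''(1)}{2}(1-t)^2\,g(t)^{-1}$ on $(0,\infty)$.

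The heart of the argument is a computation based on Theorem~\ref{l}. Writing $f(t) = \int_{[0,1]} 1!_\lambda t\, dm(\lambda)$ with $m$ a probability measure (as $f(1)=1$) and using $1!_\lambda t = t/(\lambda+(1-\lambda)t)$, I would differentiate under the integral to obtain $f'(1) = \int_{[0,1]}\lambda\, dm(\lambda)$ and $-\tfrac{f''(1)}{2} = \int_{[0,1]}\lambda(1-\lambda)\, dm(\lambda) =: C$. Substituting these into $h$, putting the integrand over a common denominator, and using the simplification $[(1-\lambda)+\lambda t][\lambda+(1-\lambda)t] = \lambda(1-\lambda)(1-t)^2 + t$, the numerator collapses to give the clean factorization
\begin{equation*}
h(t) = (1-t)^2\int_{[0,1]}\frac{\lambda(1-\lambda)}{\lambda+(1-\lambda)t}\, dm(\lambda).
\end{equation*}
This is the step I expect to be the main obstacle — not the algebra itself, but recognizing that everything organizes around the factor $(1-t)^2$ and the probability measure $d\tilde m := C^{-1}\lambda(1-\lambda)\, dm$ (legitimate precisely because non-linearity of $f$ forces $m$ to charge $(0,1)$, so $C>0$).

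With this in hand the rest is structural. Setting $\phi(t) := \int_{[0,1]} 1!_\lambda t\, d\tilde m(\lambda)$, Theorem~\ref{l} shows $\phi$ is the representing function of an operator mean $\rho$ with $\phi(1)=1$, and since $1/(\lambda+(1-\lambda)t) = t^{-1}(1!_\lambda t)$ the factorization reads $h(t) = C(1-t)^2\phi(t)/t$. The required scalar identity then forces $g(t) = t/\phi(t) = t\phi(t)^{-1}$, which is exactly the representing function of the dual mean $\rho^\perp$; I would take $\tau := \rho^\perp$, so that operator monotonicity of $g$ and $g(1)=1$ come for free from the Kubo--Ando theory recalled above. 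Back-substituting $g(T)^{-1} = A^{1/2}(A\tau B)^{-1}A^{1/2}$ then verifies (\ref{th-ex-lem-f}). Finally, for uniqueness in the non-linear case I would observe that strict concavity gives $h''=-f''>0$, so $h$ is convex with $h(1)=h'(1)=0$ and hence $h(t)>0$ for $t\neq1$; specializing (\ref{th-ex-lem-f}) to $A=I$, $B=tI$ collapses it to $h(t) = C(1-t)^2 g(t)^{-1}$, which determines $g$ — and thus $\tau$, via the bijection of Theorem~\ref{ll} — uniquely, recovering (\ref{77}).
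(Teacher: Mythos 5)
Your argument is correct and follows essentially the same route as the paper's proof: both rest on the integral representation of Theorem~\ref{l} and on the same pointwise factorization $1-\mu+\mu t-\varphi_\lambda(t)=\lambda(1-\lambda)(t-1)^2\big((1-\lambda)t+\lambda\big)^{-1}$, integrated against $dm$ and then transferred to operators by functional calculus at $T=A^{-1/2}BA^{-1/2}$. The one genuine (and welcome) difference is how you certify that $\tau$ is an operator mean: the paper writes the explicit formula (\ref{3}) and asserts this is "easily seen," whereas you normalize $\lambda(1-\lambda)\,dm$ to a probability measure $\tilde m$, let $\rho$ be its associated mean, and identify $\tau=\rho^\perp$ via $g(t)=t\phi(t)^{-1}$, so operator monotonicity of $g$ comes from the Kubo--Ando duality machinery rather than from inspection. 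The only omission is the linear case, where $C=0$ and your normalized measure does not exist; there $f''(1)=0$, both sides of (\ref{th-ex-lem-f}) vanish, and any $\tau$ works (as the paper notes), so the existence claim of the theorem should be completed by this one-line remark.
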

\begin{proof}
For brevity, we set $\mu=f'(1)$ and $\sigma=\sigma_f$. If $f$ is linear, then $f(t)=1-\mu+\mu t$, and so $\sigma=\nabla_\mu$. Since $f''(1)=0$, the assertion holds by taking $\tau$ an arbitrary operator mean. \\
Now let $f$ be non-linear. Since $f''(1)\neq0$, taking $A=I$ and $B=tI~(t>0)$, we see that, in the case of existence, $\tau$ is unique.
By Theorem \ref{l}, we have
\begin{equation*}
f(t)=\int_{[0,1]}\varphi_\lambda(t)~dm(\lambda),
\end{equation*}
where $m$ is the associated probability measure of $f$, and
\begin{equation*}
\varphi_\lambda(t):=1!_\lambda t=\frac{t}{(1-\lambda) t+\lambda},\qquad\mbox{ for all } t\in(0,\infty) \mbox{ and }\lambda\in[0,1].
\end{equation*}
Notice that
\begin{equation*}
f'(t)=\int_{[0,1]}\varphi'_\lambda(t)dm(\lambda).
\end{equation*} 
A direct computation shows 
\begin{equation}\label{j}
\varphi_{\lambda}(1)+\varphi'_{\lambda}(1)(t-1)-\varphi_\lambda(t)=\lambda(1-\lambda)(t-1)^2\Big(t\nabla_\lambda 1\Big)^{-1}.
\end{equation}
By integrating from (\ref{j}), we have
\begin{equation}\label{a}
1-\mu+\mu t-f(t)=(t-1)^2\int_{[0,1]}\lambda(1-\lambda)(t\nabla_\lambda 1)^{-1}dm(\lambda),\qquad\mbox{for all}\ t\in(0,\infty).
\end{equation}
Let
\begin{equation}\label{3}
A\tau B=-\frac{f''(1)}{2}\left(\int_{[0,1]}\lambda(1-\lambda)(B\nabla_\lambda A)^{-1}dm(\lambda)\right)^{-1},
\end{equation}
for all positive invertible operators $A, B\in\mathbb{B}^+(\mathscr{H})$. It is easily seen that $\tau$ is an operator mean, and considering Theorem \ref{ll} together with the identity (\ref{a}), the function $g$ in (\ref{77}) is its representing function.
Using functional calculus in $(\ref{a})$, at each positive invertible operator $T$, we have
\begin{equation*}
(1-\mu)I+\mu T-f(T)=(T-I)\left[\int_{[0,1]}\lambda(1-\lambda)(T\nabla_\lambda I)^{-1}dm(\lambda)\right](T-I).
\end{equation*} 
Now take $T=A^{-\frac{1}{2}}BA^{-\frac{1}{2}}$ in the above identity, multiply each side of it from left and right by $A^{\frac{1}{2}}$, and consider (\ref{3}) to get (\ref{th-ex-lem-f}).
\end{proof}
\begin{corollary}
Let $0\leq\mu\leq1$. Then there exists an operator mean $\tau$ such that for all positive invertible operators $A, B\in\mathbb{B}^+(\mathscr{H})$, 
\begin{align}
A\nabla_\mu B-A\sharp_\mu B=\frac{\mu(1-\mu)}{2}(A-B)(A\tau B)^{-1}(A-B).\label{8}
\end{align}
In particular, if $\mu=\frac{1}{2}$, then
\begin{equation}
A\nabla B-A\sharp B=\frac{1}{8}(A-B)\left(\frac{A\nabla B+A\sharp B}{2}\right)^{-1}(A-B),\label{99}
\end{equation}
for all positive invertible operators $A, B\in\mathbb{B}^+(\mathscr{H})$.
\end{corollary}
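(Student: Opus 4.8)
The plan is to specialize Theorem \ref{cc} to the representing function of the weighted geometric mean. Since $\sharp_\mu=\sigma_f$ for the operator monotone function $f(t)=t^\mu$ on $[0,\infty)$, and this $f$ satisfies $f(1)=1$, the theorem applies directly once the relevant derivatives are recorded. First I would compute $f'(t)=\mu t^{\mu-1}$ and $f''(t)=\mu(\mu-1)t^{\mu-2}$, so that $f'(1)=\mu$ and $f''(1)=\mu(\mu-1)=-\mu(1-\mu)$. Consequently $\nabla_{f'(1)}=\nabla_\mu$ and $-f''(1)/2=\mu(1-\mu)/2$. Substituting these into the identity (\ref{th-ex-lem-f}) yields, for the operator mean $\tau$ furnished by the theorem,
\[
A\nabla_\mu B-A\sharp_\mu B=\frac{\mu(1-\mu)}{2}(A-B)(A\tau B)^{-1}(A-B),
\]
which is the first assertion. (For $\mu\in\{0,1\}$ the function $f$ is linear, so the factor $\mu(1-\mu)$ and both sides vanish, consistently with the linear case of Theorem \ref{cc}.)

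For the distinguished case $\mu=\tfrac12$, the function $f(t)=\sqrt t$ is non-linear, so Theorem \ref{cc} guarantees that $\tau$ is unique and supplies its representing function explicitly through (\ref{77}). I would evaluate that $g$ by direct simplification: with $f'(1)=\tfrac12$ and $-f''(1)/2=\tfrac18$, the denominator becomes $1-\tfrac12+\tfrac12 t-\sqrt t=\tfrac12(\sqrt t-1)^2$, while the numerator factors as $(t-1)^2=(\sqrt t-1)^2(\sqrt t+1)^2$. Cancelling the common factor $(\sqrt t-1)^2$ leaves
\[
g(t)=\frac14(\sqrt t+1)^2=\frac{1+t}{4}+\frac{\sqrt t}{2}.
\]

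The final step is to recognize which mean $g$ represents. Because the correspondence $\sigma\mapsto f$ in Theorem \ref{ll} is affine, and the means $\nabla$ and $\sharp$ have representing functions $\tfrac{1+t}{2}$ and $\sqrt t$ respectively, the average mean $A,B\mapsto\tfrac12(A\nabla B+A\sharp B)$ has representing function $\tfrac12\big(\tfrac{1+t}{2}+\sqrt t\big)$, which is exactly $g(t)$; note also that $g(1)=1$, confirming it is a mean. Hence $A\tau B=\tfrac12(A\nabla B+A\sharp B)$, and substituting into the identity of the first part gives (\ref{99}). I expect the only genuine work to be the algebraic simplification producing $g$ and its identification via affineness of the mean-to-function correspondence; everything else is a direct substitution into Theorem \ref{cc}.
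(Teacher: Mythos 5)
Your proposal is correct and follows essentially the same route as the paper: substitute $f(t)=t^\mu$ into Theorem \ref{cc}, and for $\mu=\tfrac12$ simplify (\ref{77}) to $g(t)=\tfrac12\bigl(\tfrac{1+t}{2}+\sqrt{t}\bigr)$ and identify $\tau=\tfrac{\nabla+\sharp}{2}$ by affineness of the mean-function correspondence. The computations of $f''(1)$ and of $g$ check out, so there is nothing to add.
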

\begin{proof}
The identity (\ref{8}) follows by taking $f(t)=t^\mu$ in Theorem \ref{cc}. If $\mu=\frac{1}{2}$, then $g(t)=\frac{1}{2}\left(\frac{1+t}{2}+\sqrt{t}\right)$,
and so $\tau=\frac{\nabla+\sharp}{2}$.
\end{proof}
\begin{corollary}\label{2.7}
Let $0\leq\mu\leq1$. Then for all positive invertible operators $A, B\in\mathbb{B}^+(\mathscr{H})$,
\begin{align}\label{2.8}
A\nabla_\mu B-A!_\mu B=\mu(1-\mu)(A-B)(B\nabla_\mu A)^{-1}(A-B).
\end{align}
\end{corollary}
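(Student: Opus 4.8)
The plan is to obtain this identity as a direct specialization of Theorem \ref{cc}. The weighted harmonic mean $!_\mu$ is the operator mean whose representing function is $f(t)=(1-\mu+\mu t^{-1})^{-1}$, so that $\sigma_f=!_\mu$ and $f(1)=1$. For the two degenerate values $\mu\in\{0,1\}$ the function $f$ is linear ($f\equiv1$ and $f(t)=t$, respectively), the coefficient $\mu(1-\mu)$ vanishes, and both $A\nabla_\mu B$ and $A!_\mu B$ reduce to $A$ or to $B$, so the asserted equality is the trivial $0=0$. I may therefore assume $0<\mu<1$, where $f$ is non-linear and Theorem \ref{cc} furnishes a unique mean $\tau$ with representing function $g$ given by (\ref{77}).

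First I would compute the two derivatives of $f$ at $1$. Writing $u(t)=1-\mu+\mu t^{-1}$ so that $f=u^{-1}$, a routine differentiation gives $f'(t)=\mu t^{-2}u(t)^{-2}$ and hence $f'(1)=\mu$; this already identifies $\nabla_{f'(1)}=\nabla_\mu$ on the left-hand side of (\ref{th-ex-lem-f}). Differentiating once more and evaluating at $t=1$ (where $u(1)=1$) yields $f''(1)=-2\mu(1-\mu)$, so the prefactor in (\ref{th-ex-lem-f}) is $-f''(1)/2=\mu(1-\mu)$, matching the coefficient in (\ref{2.8}).

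The crux is to identify the mean $\tau$ by simplifying its representing function $g$ from (\ref{77}). The only genuine computation is the denominator $1-f'(1)+f'(1)t-f(t)=1-\mu+\mu t-\frac{t}{(1-\mu)t+\mu}$; clearing the common denominator $(1-\mu)t+\mu$, the numerator collapses to $\mu(1-\mu)(t-1)^2$ after using $(1-\mu)^2+\mu^2-1=-2\mu(1-\mu)$, so that this denominator equals $\mu(1-\mu)(t-1)^2\big((1-\mu)t+\mu\big)^{-1}$. Substituting into (\ref{77}) the factors $\mu(1-\mu)(t-1)^2$ cancel and leave $g(t)=(1-\mu)t+\mu$, which is precisely the representing function of $\nabla_{1-\mu}$. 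Therefore $A\tau B=A\nabla_{1-\mu}B=B\nabla_\mu A$, and inserting $(A\tau B)^{-1}=(B\nabla_\mu A)^{-1}$ together with $-f''(1)/2=\mu(1-\mu)$ into (\ref{th-ex-lem-f}) gives exactly (\ref{2.8}). I expect the main (though modest) obstacle to be this algebraic reduction of $g$, i.e. verifying that the ratio in (\ref{77}) is linear; as an independent check one can bypass Theorem \ref{cc} altogether by congruence, setting $T=A^{-\frac{1}{2}}BA^{-\frac{1}{2}}$ so that every term is $A^{\frac12}(\,\cdot\,)A^{\frac12}$ of a function of the single operator $T$, whence (\ref{2.8}) follows from the scalar identity $(1-\mu+\mu t)-\big(1-\mu+\mu t^{-1}\big)^{-1}=\mu(1-\mu)(1-t)^2\big((1-\mu)t+\mu\big)^{-1}$ via functional calculus.
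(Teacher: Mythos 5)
Your proposal is correct and follows exactly the paper's route: the paper likewise obtains (\ref{2.8}) by taking $f(t)=1!_\mu t$ in Theorem \ref{cc} and noting that the resulting representing function is $g(t)=1\nabla_{1-\mu}t$, so that $\tau=\nabla_{1-\mu}$ and $A\tau B=B\nabla_\mu A$. You merely supply the (correct) computations of $f'(1)$, $f''(1)$, and the algebraic reduction of (\ref{77}) that the paper leaves implicit, plus the trivial check for $\mu\in\{0,1\}$.
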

\begin{proof}
The identity (\ref{2.8}) follows by letting $f(t)=1!_\mu t$ in Theorem \ref{cc}, and taking into account that $g(t)=1\nabla_{1-\mu}t$, when $0<\mu<1$.
\end{proof}
\begin{remark}\label{ddd}
	Let $f:[0,\infty)\rightarrow[0,\infty)$ be an operator monotone function with $f (1) = 1$, $f'(1)=\mu$, and associated mean $\sigma$. Since $f(t)\leq t$ for $t\geq1$, see \cite{FMPS}, we have
	\begin{equation}\label{11}
	\lim_{t\rightarrow+\infty}\frac{f(t)}{t^2}=0.	
	\end{equation}
	Let $h(t):=tf(t^{-1})$ be the representing function of $\sigma^0$. Clearly $h(1)=1$, and since 
	\begin{align*}
	h'(t)=f(t^{-1})-t^{-1}f'(t^{-1}),\qquad\qquad\qquad
	h''(t)=t^{-3}f''(t^{-1}),
	\end{align*}
	we have $h'(1)=1-\mu$ and $h''(1)=f''(1)$. \\
	Now, let $f$ be non-linear. Since $h''(1)<0$, $h$ is also non-linear.
	Suppose that $\tau$ and $g$ are corresponded to $f$, while  $\tau_1$ and $g_1$ to $h$, as in (\ref{th-ex-lem-f}) and (\ref{77}). By a straightforward computation, we see that $g_1(t)=tg(t^{-1})$, and so $\tau_1=\tau^0$. Thus, by Theorem \ref{cc}, for all positive invertible operators $A, B\in\mathbb{B}^+(\mathscr{H})$, we have
	\begin{align}\label{2.12}
	A\nabla_{1-\mu} B-A\sigma^0 B=-\frac{f''(1)}{2}(A-B)(B\tau A)^{-1}(A-B).
	\end{align}
	Using (\ref{11}), we get $\lim_{t\rightarrow+\infty} g(t)=+\infty$, and in particular $g'(1)\neq0$. By the same reasoning, $g_1'(1)\neq0$, which yields that $g'(1)\neq1$.
	\end{remark}
\section{Ky Fan type inequalities}
In this section, we aim to obtain some operator Ky Fan type inequalities corresponding to (\ref{ig})--(\ref{ihhh}). Also, we discus the case when equality holds in each of the obtaining inequalities. As a convention, if $A, B\in\mathbb{B}^+(\mathscr{H})$ such that $0<A,B\leq\frac{1}{2}I$, we set $A':=I-A$ and $B':=I-B$. Clearly, $A'-B'=B-A$, $A\leq A'$ and $B\leq B'$.
\subsection{Mean inequalities}
The next theorem gives us an extended natural operator version of the Ky Fan type inequalities 
(\ref{ig}) and (\ref{igg}) with its equality condition.
\begin{theorem}\label{68}
	Let $f:[0,\infty)\rightarrow[0,\infty)$ be an operator monotone function with $f(1) = 1$, $f'(1)=\mu$, and associated mean $\sigma$. If $A, B\in\mathbb{B}^+(\mathscr{H})$ such that $0<A,B\leq\frac{1}{2}I$, then 
	\begin{equation}\label{p}
	A'\nabla_\mu B'-A'\sigma B'\leq A\nabla_\mu B-A\sigma B.
	\end{equation}
	Moreover, when $f$ is non-linear, equality holds if and only if $A = B$.
\end{theorem}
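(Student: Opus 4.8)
The plan is to apply the mean identity (\ref{th-ex-lem-f}) of Theorem~\ref{cc} to both pairs $(A,B)$ and $(A',B')$ and compare the two right-hand sides. Writing $C:=A-B$ (a self-adjoint operator) and noting that $A'-B'=(I-A)-(I-B)=B-A=-C$, the identity gives
\begin{align*}
A\nabla_\mu B-A\sigma B&=-\tfrac{f''(1)}{2}\,C(A\tau B)^{-1}C,\\
A'\nabla_\mu B'-A'\sigma B'&=-\tfrac{f''(1)}{2}\,C(A'\tau B')^{-1}C,
\end{align*}
since $(-C)(A'\tau B')^{-1}(-C)=C(A'\tau B')^{-1}C$. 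Thus (\ref{p}) is equivalent to $-\tfrac{f''(1)}{2}\,C(A'\tau B')^{-1}C\le-\tfrac{f''(1)}{2}\,C(A\tau B)^{-1}C$. If $f$ is linear the constant $-f''(1)/2$ equals $0$ and both sides vanish, so there is nothing to prove; hence I may assume $f$ is non-linear, in which case $-f''(1)/2>0$.

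Next I reduce the inequality to a comparison of the two means. The hypothesis $0<A,B\le\frac12 I$ yields $A\le A'$ and $B\le B'$, so axiom (M1) for the connection $\tau$ gives $A\tau B\le A'\tau B'$. Both operators are positive and invertible (means of positive invertibles; indeed $\frac12 I\le A'<I$), so inversion reverses the order to give $(A'\tau B')^{-1}\le(A\tau B)^{-1}$. Congruence by the self-adjoint operator $C$ preserves the L\"owner order, whence $C(A'\tau B')^{-1}C\le C(A\tau B)^{-1}C$; multiplying by $-f''(1)/2>0$ proves (\ref{p}).

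For the equality discussion, the easy direction is immediate: if $A=B$ then $C=0$ and both sides of (\ref{p}) vanish. For the converse I set $S:=(A\tau B)^{-1}-(A'\tau B')^{-1}\ge0$; equality in (\ref{p}) forces $CSC=0$, and hence $\langle Sy,y\rangle=0$ for every $y$ in the range of $C$. Here I would invoke the explicit integral form (\ref{3}) of $\tau$: writing $R_\lambda:=B\nabla_\lambda A=(1-\lambda)B+\lambda A$ and using $B'\nabla_\lambda A'=I-R_\lambda$, one gets
\[
S=-\tfrac{2}{f''(1)}\int_{[0,1]}\lambda(1-\lambda)\bigl[R_\lambda^{-1}-(I-R_\lambda)^{-1}\bigr]\,dm(\lambda),
\]
where $m$ is the probability measure associated with $f$. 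Since $R_\lambda\le\frac12 I\le I-R_\lambda$ we have $R_\lambda^{-1}-(I-R_\lambda)^{-1}\ge0$, so the integrand in $\langle Sy,y\rangle$ is nonnegative; its integral being $0$ forces $\langle[R_\lambda^{-1}-(I-R_\lambda)^{-1}]y,y\rangle=0$, and therefore $R_\lambda y=\frac12 y$, for $m$-almost every $\lambda\in(0,1)$.

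The crux of the argument — and the step I expect to be the main obstacle — is to convert the single spectral condition $R_\lambda y=\frac12 y$ into the two-sided conclusion $Ay=By=\frac12 y$, and this is exactly where the Ky Fan constraint $A,B\le\frac12 I$ enters. I would write
\[
\tfrac12 I-R_\lambda=(1-\lambda)\bigl(\tfrac12 I-B\bigr)+\lambda\bigl(\tfrac12 I-A\bigr),
\]
a combination of the two \emph{positive} operators $\frac12 I-A$ and $\frac12 I-B$ with strictly positive coefficients when $\lambda\in(0,1)$. Hence $(\frac12 I-R_\lambda)y=0$ forces both $(\frac12 I-A)y=0$ and $(\frac12 I-B)y=0$, so $(A-B)y=0$. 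Because $f$ is non-linear, its associated measure satisfies $m((0,1))>0$ (a measure carried by $\{0,1\}$ would make $f$ affine), so such a $\lambda\in(0,1)$ is available for each $y$ in the range of $C$; thus $C$ annihilates its own range, i.e. $C^2=0$, and self-adjointness gives $C=A-B=0$. This shows that, for non-linear $f$, equality in (\ref{p}) holds precisely when $A=B$.
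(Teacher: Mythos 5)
Your derivation of the inequality itself coincides with the paper's: both apply the identity (\ref{th-ex-lem-f}) of Theorem \ref{cc} to $(A,B)$ and to $(A',B')$, note $A'-B'=-(A-B)$, use $A\tau B\le A'\tau B'$ (from $A\le A'$, $B\le B'$ and axiom (M1)), invert, and conjugate by the self-adjoint operator $A-B$. Your equality analysis, however, is correct but takes a genuinely different route. The paper stays at the level of the representing function $g$ of $\tau$: it invokes Remark \ref{ddd} to get $0<g'(1)<1$, sandwiches via $(A'\tau B')^{-1}\le(A'!_\gamma B')^{-1}\le 2I\le(A\nabla_\gamma B)^{-1}\le(A\tau B)^{-1}$, and then splits into the cases $g$ linear versus $g$ non-linear, in each case extracting a scalar identity on the spectrum of $A^{-1/2}BA^{-1/2}$ (or its primed analogue) that forces the spectrum to be $\{1\}$. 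You instead unpack $\tau$ through its defining integral (\ref{3}), observe that each integrand difference $R_\lambda^{-1}-(I-R_\lambda)^{-1}$ is positive because $R_\lambda\le\tfrac12 I\le I-R_\lambda$, and reduce equality to $R_\lambda y=\tfrac12 y$ for some $\lambda\in(0,1)$ and every $y$ in the range of $A-B$; the decomposition $\tfrac12 I-R_\lambda=(1-\lambda)(\tfrac12 I-B)+\lambda(\tfrac12 I-A)$ into positive summands then yields $Ay=By=\tfrac12 y$, whence $(A-B)^2=0$ and $A=B$. Your route avoids the auxiliary function $g$, Remark \ref{ddd}, and the case distinction, and even gives the sharper structural conclusion $Ay=By=\tfrac12 y$ on the range of $A-B$; it requires instead the (true) facts that $m((0,1))>0$ for non-linear $f$ and that the quadratic form of the integral in (\ref{3}) may be evaluated under the integral sign (legitimate, as $\lambda\mapsto(B\nabla_\lambda A)^{-1}$ is norm-continuous). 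Both arguments are valid.
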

\begin{proof}
Suppose that $\tau$ and $g$ are as in (\ref{th-ex-lem-f}) and (\ref{77}). Since $A',B'>0$, using (\ref{th-ex-lem-f}) for $A'$ and $B'$ instead of $A$ and $B$, respectively, we get
	\begin{equation}
	A'\nabla_\mu B'-A'\sigma B'=-\frac{f''(1)}{2}(A-B)(A'\tau B')^{-1}(A-B).\label{cor-ex-i}
	\end{equation}
	Since $\big(A'\tau B'\big)^{-1}\leq\big(A\tau B\big)^{-1}$ and $-f''(1)\geq0$, comparison of (\ref{th-ex-lem-f}) and (\ref{cor-ex-i}) yields (\ref{p}).\\
	Clearly, if $A=B$, then equality holds in (\ref{p}). Now, suppose that $f$ is non-linear and equality holds in (\ref{p}). By (\ref{th-ex-lem-f}) and (\ref{cor-ex-i}), we get
	\begin{equation}\label{24}
	(A-B)(A'\tau B')^{-1}(A-B)=(A-B)(A\tau B)^{-1}(A-B).
	\end{equation}
	Let $\gamma=g'(1)$. By Remark \ref{ddd}, we have $0<\gamma<1$. Using (\ref{23}) for the operator mean $\tau$ with its representing function $g$, we get
	\begin{equation}\label{45}
	(A'\tau B')^{-1}\leq(A'!_\gamma B')^{-1}\leq 2I\leq(A\nabla_\gamma B)^{-1}\leq(A\tau B)^{-1}.
	\end{equation}
	There are two cases: \\
	$\textbf{Case 1.}$ The function $g$ is linear. So, $\tau=\nabla_\gamma$.
	According to (\ref{24}) and (\ref{45}), we have
	\begin{equation*}
	(A'-B')(A'\nabla_\gamma B')^{-1}(A'-B')=(A'-B')(A'!_\gamma B')^{-1}(A'-B').
	\end{equation*}
	This implies that 
	\begin{equation*}
	(I-T)\left[(1-\gamma)I+\gamma T\right]^{-1}(I-T)=(1-T)\left[(1-\gamma)I+\gamma T^{-1}\right](I-T),
	\end{equation*}
	where $T=A'^{-\frac{1}{2}}B'A'^{-\frac{1}{2}}$.
	Thus 
	$\gamma(1-\gamma)(t-1)^4=0$ for all $t\in {\rm sp}(T)$. Since $\gamma\neq0,1$, we have ${\rm sp}(T)=\{1\}$, which concludes that $T=I$, and so $A=B$.\\
	$\textbf{Case 2.}$ The function $g$ is non-linear. Therefore, $g$ is strictly concave, and so $g(t)<1\nabla_\gamma t~(0<t\neq1)$.
	According to (\ref{24}) and (\ref{45}), we have
	\begin{align}
	(A-B)(A\tau B)^{-1}(A-B)=(A-B)(A\nabla_{\gamma} B)^{-1}(A-B).
	\end{align}
	This implies that 
	\begin{equation*}
	(I-T)g(T)^{-1}(I-T)=(1-T)\left[(1-\gamma)I+\gamma T\right]^{-1}(I-T),
	\end{equation*}
	where $T=A^{-\frac{1}{2}}BA^{-\frac{1}{2}}$.
	Thus 
	$(1\nabla_\gamma t-g(t))(t-1)^2=0$ for all $t\in {\rm sp}(T)$, which yields ${\rm sp}(T)=\{1\}$, and so $A=B$.
	\end{proof}
\begin{corollary}
Let $0\leq\mu\leq1$. If $A, B\in\mathbb{B}^+(\mathscr{H})$ such that $0<A,B\leq\frac{1}{2}I$, then 
	\begin{align*}
	A'\nabla_\mu B'-A'\sharp_\mu B'&\leq A\nabla_\mu B-A\sharp_\mu B,\\
	A'\nabla_\mu B'-A'!_\mu B'&\leq A\nabla_\mu B-A!_\mu B.
	\end{align*}
	Moreover, when $0<\mu<1$, equality holds in each of the above inequalities if and only if $A = B$.
\end{corollary}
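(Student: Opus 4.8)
The plan is to read off both inequalities as immediate instances of Theorem \ref{68}, choosing the operator monotone function $f$ so that its associated mean $\sigma$ is $\sharp_\mu$ in the first line and $!_\mu$ in the second. Concretely, I would take $f(t)=t^{\mu}$ for the geometric case and $f(t)=(1-\mu+\mu t^{-1})^{-1}=1!_\mu t$ for the harmonic case; by the correspondence recorded right after Theorem \ref{ll}, these are precisely the representing functions of $\sharp_\mu$ and $!_\mu$ on $[0,\infty)$.

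First I would verify that each $f$ meets the hypotheses of Theorem \ref{68}. Both map $[0,\infty)$ into $[0,\infty)$, are operator monotone, and satisfy $f(1)=1$. A short differentiation gives $f'(1)=\mu$ in both cases: for $f(t)=t^{\mu}$ one has $f'(t)=\mu t^{\mu-1}$, while for $f(t)=t/((1-\mu)t+\mu)$ one gets $f'(t)=\mu/((1-\mu)t+\mu)^{2}$, each evaluating to $\mu$ at $t=1$. Hence $\nabla_{f'(1)}=\nabla_{\mu}$ in the statement of Theorem \ref{68}, and substituting the two choices of $f$ into (\ref{p}) produces exactly the two displayed inequalities. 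The endpoint values $\mu\in\{0,1\}$ are also covered: there $f$ is linear (constant or the identity), Theorem \ref{68} still yields (\ref{p}), and each of $\sharp_\mu,!_\mu,\nabla_\mu$ reduces to the first variable (for $\mu=0$) or the second variable (for $\mu=1$), so both sides vanish and the inequality holds trivially.

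For the equality clause I would take $0<\mu<1$ and check that each $f$ is then non-linear, so that the non-linear clause of Theorem \ref{68} applies and forces $A=B$. Both $t^{\mu}$ and $1!_\mu t$ are strictly concave on $(0,\infty)$ in this range; for the harmonic representing function this follows from $f''(t)=-2\mu(1-\mu)((1-\mu)t+\mu)^{-3}<0$, and for $t^{\mu}$ it is classical. Thus, in each inequality, equality holds if and only if $A=B$, matching the stated range $0<\mu<1$.

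There is no genuine obstacle here, since all of the analytic content is already packaged inside Theorem \ref{68}; the argument reduces to recognizing $\sharp_\mu$ and $!_\mu$ as means arising from operator monotone functions with $f(1)=1$ and $f'(1)=\mu$. The only point requiring a moment of care is aligning the non-linearity hypothesis of Theorem \ref{68} with the open interval $0<\mu<1$ of the corollary, which is exactly where $t^{\mu}$ and $1!_\mu t$ cease to be affine.
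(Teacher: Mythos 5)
Your proposal is correct and is exactly the argument the paper intends: the corollary is stated without proof as an immediate specialization of Theorem \ref{68} to $f(t)=t^{\mu}$ and $f(t)=1!_\mu t$, which is precisely what you do (including the check that $f(1)=1$, $f'(1)=\mu$, and that non-linearity corresponds to $0<\mu<1$). No gaps.
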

The following example shows that, unfortunately except than (\ref{ig}) and (\ref{igg}), the natural operator versions of the other Ky Fan type inequalities do not hold, even in the case when operators are order comparable with each other.
\begin{example}
	Let\begin{equation*}
	A =\begin{bmatrix} \frac{1}{5} & \frac{-1}{10} \\
	\\
	\frac{-1}{10} & \frac{1}{3} \end{bmatrix},\quad\quad\\
	B= \begin{bmatrix} \frac{2}{15} & \frac{-1}{10} \\ 
	\\
	\frac{-1}{10} & \frac{1}{3} \end{bmatrix}.
	\end{equation*}
	It is easily seen that $0<B\leq A\leq\frac{1}{2}I$. Now taking $\mu=\frac{1}{2}$ and using Mathematica, we find that 
	{\scriptsize\begin{align*}
		&(A\sharp B)^{-1}-(A\nabla B)^{-1}-\left((A'\sharp B')^{-1}-(A'\nabla B')^{-1}\right)=\begin{bmatrix} 0.226844 & 0.0685098 \\
		\\
		0.0685098 & 0.0204844 \end{bmatrix}\ngeq0,
		\\
		&(A\sharp B)^{-1/2}(A\nabla B) (A\sharp B)^{-1/2}-(A'\sharp B')^{-1/2}(A'\nabla B') (A'\sharp B')^{-1/2}=\begin{bmatrix} 0.0292946 & 0.00560064 \\
		\\
		0.00560064 & 0.00101542 \end{bmatrix}\ngeq0,\\
		\\
		&(A\nabla B)^{1/2}(A\sharp B)^{-1} (A\nabla B)^{1/2}-(A'\nabla B')^{1/2}(A'\sharp B')^{-1} (A'\nabla B')^{1/2}=\begin{bmatrix} 0.0293063& 0.00556985 \\
		\\
		0.00556985 & 0.00100374 \end{bmatrix}\ngeq0.\\
		\end{align*}} 
By exactly the same data, we observe that there is no natural operator version for inequalities (\ref{ikk}), (\ref{ikkk}), (\ref{ihh}) and (\ref{ihhh}).
\end{example}
In the next part we obtain the natural eigenvalue versions for the Ky Fan type inequalities (\ref{ik})--(\ref{ihhh}). To do this, we need the following results. We state the following lemma  which can be easily achieved by the mean value theorem.
\begin{lemma}\label{h3}
If $u,v>0$ and $a<0$, then
\begin{equation}\label{h0}
av^{a-1}(u-v)\leq u^{a}-v^{a}\leq au^{a-1}(u-v).
\end{equation}
Equality holds in each inequality of (\ref{h0}) if and only if $u=v$.
\end{lemma}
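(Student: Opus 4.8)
The plan is to apply the Mean Value Theorem to the function $\phi(t):=t^{a}$ on $(0,\infty)$ and then locate the intermediate point by exploiting monotonicity of the derivative. First I would dispose of the trivial case $u=v$, in which all three quantities in (\ref{h0}) vanish, so that equality holds everywhere. Assuming henceforth $u\neq v$, I note that $\phi$ is differentiable with $\phi'(t)=at^{a-1}$, so the Mean Value Theorem furnishes a point $\xi$ strictly between $u$ and $v$ with
\begin{equation*}
u^{a}-v^{a}=a\xi^{a-1}(u-v).
\end{equation*}

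Next I would exploit the sign structure. Since $a<0$ we have $a-1<0$, so $t\mapsto t^{a-1}$ is strictly decreasing on $(0,\infty)$; as $\xi$ lies strictly between $u$ and $v$, the value $\xi^{a-1}$ lies strictly between $u^{a-1}$ and $v^{a-1}$. I would then multiply the resulting strict inequalities by the constant factor $a(u-v)$. Treating the cases $u>v$ and $u<v$ separately, one checks that this multiplication sends $\xi^{a-1}$ exactly onto the two endpoint bounds, yielding $av^{a-1}(u-v)<u^{a}-v^{a}<au^{a-1}(u-v)$. Because the inequalities are strict precisely when $u\neq v$, this simultaneously establishes (\ref{h0}) and pins down the equality condition.

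The only point requiring care — and the main, though modest, obstacle — is the bookkeeping of signs: multiplication by $a<0$ reverses inequalities, and in the case $u<v$ the factor $u-v$ reverses them a second time, so one must track both sign changes to be sure the correct endpoint emerges as the bound. An equivalent and arguably cleaner route is to observe that $\phi(t)=t^{a}$ is strictly convex on $(0,\infty)$, since $\phi''(t)=a(a-1)t^{a-2}>0$ for $a<0$; the supporting line inequalities $\phi(u)-\phi(v)\geq\phi'(v)(u-v)$ and $\phi(u)-\phi(v)\leq\phi'(u)(u-v)$ then give the lower and upper bounds of (\ref{h0}) directly, with strict convexity forcing strict inequality exactly when $u\neq v$.
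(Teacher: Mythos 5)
Your argument is correct and follows the same route the paper indicates: the paper gives no written proof beyond the remark that the lemma ``can be easily achieved by the mean value theorem,'' and your main argument is precisely that MVT computation, with the sign bookkeeping carried out correctly in both cases $u>v$ and $u<v$. The alternative via strict convexity of $t\mapsto t^{a}$ is also valid and handles the strictness/equality condition cleanly.
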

\begin{proposition}\label{214}
	Let $\sigma$ and $ \tau$ be two operator means with representing functions $f$ and $g$, respectively. If $A, B\in\mathbb{B}^+(\mathscr{H})$ are two positive invertible operators, then 
{\scriptsize\begin{align}\label{22}
	(A\tau B)^{-1}\Big(A\tau B-A\sigma B\Big)(A\tau B)^{-1}\leq(A\sigma B)^{-1}-(A\tau B)^{-1}
	\leq(A\sigma B)^{-1}\Big(A\tau B-A\sigma B\Big)(A\sigma B)^{-1}.
	\end{align}}
Moreover, when $f(t)\neq g(t)~(0<t\neq 1)$, equality holds in each inequality of (\ref{22}) if and only if $A=B$.	
\end{proposition}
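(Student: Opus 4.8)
The plan is to reduce both inequalities in (\ref{22}) to the single elementary operator inequality $W+W^{-1}\geq 2I$, valid for every positive invertible $W$, by means of suitable congruence transformations. Write $X:=A\sigma B$ and $Y:=A\tau B$; by (\ref{F}) both are positive invertible operators. I first record the purely algebraic rewritings
\begin{align*}
(A\sigma B)^{-1}-(A\tau B)^{-1}&=X^{-1}-Y^{-1},\\
(A\tau B)^{-1}\big(A\tau B-A\sigma B\big)(A\tau B)^{-1}&=Y^{-1}(Y-X)Y^{-1},\\
(A\sigma B)^{-1}\big(A\tau B-A\sigma B\big)(A\sigma B)^{-1}&=X^{-1}(Y-X)X^{-1},
\end{align*}
so that (\ref{22}) is exactly $Y^{-1}(Y-X)Y^{-1}\leq X^{-1}-Y^{-1}\leq X^{-1}(Y-X)X^{-1}$.

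For the right-hand inequality I would conjugate by $X^{\frac{1}{2}}$, using that $Z\mapsto X^{\frac{1}{2}}ZX^{\frac{1}{2}}$ is an order isomorphism of the self-adjoint operators. Putting $W:=X^{-\frac{1}{2}}YX^{-\frac{1}{2}}>0$ and noting $X^{\frac{1}{2}}Y^{-1}X^{\frac{1}{2}}=W^{-1}$, the right-hand inequality becomes $I-W^{-1}\leq W-I$, that is, $2I\leq W+W^{-1}$. For the left-hand inequality I would conjugate by $Y^{\frac{1}{2}}$ instead; with $V:=Y^{-\frac{1}{2}}XY^{-\frac{1}{2}}>0$ and $Y^{\frac{1}{2}}X^{-1}Y^{\frac{1}{2}}=V^{-1}$ it becomes $I-V\leq V^{-1}-I$, that is, $2I\leq V+V^{-1}$. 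Thus both halves of (\ref{22}) follow once $W+W^{-1}\geq 2I$ is established for an arbitrary positive invertible $W$. This is immediate: applying the case $a=-1$, $v=1$ of Lemma \ref{h3} gives the scalar inequality $w+w^{-1}\geq 2$ with equality precisely when $w=1$, and lifting it pointwise over ${\rm sp}(W)$ through the monotonicity principle yields $W+W^{-1}-2I\geq 0$ (equivalently one may use the factorization $W+W^{-1}-2I=(W^{\frac{1}{2}}-W^{-\frac{1}{2}})^2\geq 0$).

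For the equality discussion, the scalar equality case $w+w^{-1}=2$ if and only if $w=1$ (again via the monotonicity principle) shows that equality holds in the right-hand inequality of (\ref{22}) exactly when $W=I$, and in the left-hand one exactly when $V=I$; since both conjugations are equivalences, each of these is in turn equivalent to $X=Y$, i.e. to $A\sigma B=A\tau B$. If $A=B$, then $X=Y=A$ and all three members of (\ref{22}) coincide, so equality holds throughout. Conversely, assume equality in one of the inequalities, so that $A\sigma B=A\tau B$. By (\ref{F}), with $T:=A^{-\frac{1}{2}}BA^{-\frac{1}{2}}>0$, this reads $f(T)=g(T)$, whence $f(t)=g(t)$ for every $t\in{\rm sp}(T)$ by the monotonicity principle. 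Since by hypothesis $f(t)\neq g(t)$ for all $0<t\neq 1$, we must have ${\rm sp}(T)=\{1\}$, hence $T=I$ and therefore $A=B$.

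I expect the substantive content to be entirely carried by the core inequality $W+W^{-1}\geq 2I$; the remaining work is congruence bookkeeping, where the only point demanding care is checking that conjugation by $X^{\frac{1}{2}}$ (respectively $Y^{\frac{1}{2}}$) converts each side of (\ref{22}) precisely into the form $2I\leq W+W^{-1}$ (respectively $2I\leq V+V^{-1}$). In the equality analysis, the delicate step is the passage from $A\sigma B=A\tau B$ back to $A=B$, which relies on invoking the ``moreover'' clause of the monotonicity principle together with the standing hypothesis $f(t)\neq g(t)$ for $t\neq 1$.
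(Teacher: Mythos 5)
Your proof is correct, and it reaches (\ref{22}) by a genuinely different route than the paper. The paper substitutes $u=f(t)$, $v=g(t)$, $a=-1$ into Lemma \ref{h3} to obtain a scalar chain of inequalities between the representing functions, then lifts it by functional calculus at $T=A^{-\frac{1}{2}}BA^{-\frac{1}{2}}$ and a congruence by $A^{-\frac{1}{2}}$ via (\ref{F}); the equality condition is then read off pointwise on ${\rm sp}(T)$ from the scalar equality case of Lemma \ref{h3}, which directly yields $f=g$ on ${\rm sp}(T)$. You instead treat $X=A\sigma B$ and $Y=A\tau B$ as abstract positive invertible operators and reduce both halves of (\ref{22}), by congruences with $X^{\pm\frac{1}{2}}$ and $Y^{\pm\frac{1}{2}}$, to the single inequality $W+W^{-1}\geq 2I$, i.e.\ $(W^{\frac{1}{2}}-W^{-\frac{1}{2}})^{2}\geq 0$; your bookkeeping of the two conjugations is accurate. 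What your approach buys is generality and transparency: (\ref{22}) is seen to hold with any two positive invertible operators occupying the slots of $A\sigma B$ and $A\tau B$, the mean structure and the hypothesis $f(t)\neq g(t)$ for $0<t\neq 1$ entering only at the very end, to pass from $A\sigma B=A\tau B$ to $f=g$ on ${\rm sp}(T)$ and hence to $A=B$. What the paper's approach buys is uniformity: Lemma \ref{h3} is stated for general exponents $a<0$, stays at the level of the representing functions throughout, and its equality clause settles the equality discussion in one stroke without the intermediate equivalence with $X=Y$. Both arguments ultimately rest on the same scalar fact, the convexity of $t\mapsto t^{-1}$.
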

\begin{proof}
Taking $t>0$ and substituting $u=f(t)$, $v=g(t)$, and $a=-1$ in Lemma \ref{h3}, we obtain
{\footnotesize \begin{align}\label{k}
\big(g(t)\big)^{-1}\big[g(t)-f(t)\big]\big(g(t)\big)^{-1}\leq\big(f(t)\big)^{-1}-\big(g(t)\big)^{-1}
\leq\big(f(t)\big)^{-1}\big[g(t)-f(t)\big]\big(f(t)\big)^{-1}.
\end{align}}
Utilizing functional calculus, replacing $t$ with $A^{-\frac{1}{2}}BA^{-\frac{1}{2}}$ in (\ref{k}), and multiplying each side of them from left and right by $A^{-\frac{1}{2}}$, we get (\ref{22}). Now, let $f(t)\neq g(t)~(0<t\neq 1)$ and equality holds in the left (right) inequality of (\ref{22}). Then for each $t\in {\rm sp}(A^{-\frac{1}{2}}BA^{-\frac{1}{2}})$, equality holds in the left (right) inequality of (\ref{k}), which by Lemma \ref{h3}, yields $f(t)=g(t)$. Therefore, ${\rm sp}(A^{-\frac{1}{2}}BA^{-\frac{1}{2}})=\{1\}$, and so $A=B$.
\end{proof}
\begin{corollary}
Let $f:[0,\infty)\rightarrow[0,\infty)$ be an operator monotone function with $f (1) = 1$, $f'(1)=\mu$, and associated mean $\sigma$. If $A, B\in\mathbb{B}^+(\mathscr{H})$ are two positive invertible operators, then
{\tiny\begin{align}\label{f22}
	(A\nabla_\mu B)^{-1}\Big(A\nabla_\mu B-A\sigma B\Big)(A\nabla_\mu B)^{-1}\leq(A\sigma B)^{-1}-(A\nabla_\mu B)^{-1}
	\leq(A\sigma B)^{-1}\Big(A\nabla_\mu B-A\sigma B\Big)(A\sigma B)^{-1}.
	\end{align}}
Moreover, when $f$ is non-linear, equality holds in each inequality of (\ref{f22}) if and only if $A=B$.
\end{corollary}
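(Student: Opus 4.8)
The plan is to obtain (\ref{f22}) as a direct specialization of Proposition \ref{214}, choosing the second mean to be the weighted arithmetic mean. Recall that the representing function of $\nabla_\mu$ is $1-\mu+\mu t$. First I would apply Proposition \ref{214} with $\tau=\nabla_\mu$ and with $\sigma$ the mean of the present corollary (whose representing function is $f$). Writing out the chain (\ref{22}) with $A\tau B=A\nabla_\mu B$ and reading off the three terms reproduces (\ref{f22}) verbatim, so the inequality part demands no further computation at all.

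For the equality assertion, the only thing left to verify is that the hypothesis ``$f(t)\neq g(t)$ for $0<t\neq1$'' of Proposition \ref{214} holds whenever $f$ is non-linear, where $g(t)=1-\mu+\mu t$ is the representing function of $\nabla_\mu$. Here I would appeal to Remark \ref{31}: a non-linear operator monotone function $f$ with $f(1)=1$ is strictly concave with $0<\mu<1$, and the right-hand inequality in (\ref{30}), namely $f(t)\leq 1-\mu+\mu t$, is strict whenever $t\neq1$. Hence $f(t)<g(t)$ for every $t>0$ with $t\neq1$, so in particular $f(t)\neq g(t)$ on $(0,\infty)\setminus\{1\}$. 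The equality clause of Proposition \ref{214} then yields that equality holds in either inequality of (\ref{f22}) if and only if $A=B$.

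I do not anticipate any genuine obstacle here: the entire analytic content of the corollary is already packaged in Proposition \ref{214}, and the sole point needing attention is confirming its equality hypothesis, which is immediate from the strict concavity recorded in Remark \ref{31}. The one thing I would take care to state explicitly is the bookkeeping of which mean plays the role of $\sigma$ and which plays the role of $\tau$ in Proposition \ref{214}, since the inequality is not symmetric in the two means.
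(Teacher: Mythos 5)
Your proposal is correct and matches the paper's intent exactly: the corollary is stated there as an immediate specialization of Proposition \ref{214} with $\tau=\nabla_\mu$, and the equality hypothesis $f(t)\neq 1-\mu+\mu t$ for $0<t\neq1$ is indeed supplied by the strict concavity noted in Remark \ref{31}. Nothing further is needed.
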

\begin{remark}\label{4}
Let $f,g:[0,\infty)\rightarrow[0,\infty)$ be two different operator monotone functions with $f(1)=g(1)= 1$, and associated means $\sigma$ and $\tau$, respectively. It may happen $f$ and $g$ meet together at a point $0<t_0\neq1$, even if $f'(1)=g'(1)$. In this case, if $A=t_0^{-1}I$ and $B=I$, then ${\rm sp}(A^{-\frac{1}{2}}BA^{-\frac{1}{2}})=\{t_0\}$, and so equality holds in each of inequalities in (\ref{22}), while $A\neq B$. This shows that the hypotheses $f(t)\neq g(t)~(0<t\neq 1)$ for the case of equality in Proposition \ref{214} is not superfluous.
For an example of such functions, let $0 < r < s < 1$, $f(t) = (2t)^r$ and $g(t) = (2t)^s$ on $[0, \infty)$.
Since $f,g:[0,\infty)\rightarrow[0,\infty)$ are operator monotone functions,
$\mathfrak{B}(\mathfrak{B}(f))$ and $\mathfrak{B}(\mathfrak{B}(g))$ are operator monotone functions, where $\mathfrak{B}$ defined by
\begin{equation*}
\mathfrak{B}(f)(t) = \frac{t + f(t)}{1 + f(t)},\qquad t\geq 0,
\end{equation*}
is the Barbour transform, see \cite{Osaka}. Since $\mathfrak{B}$ is injective, $\mathfrak{B}(\mathfrak{B}(f))\neq \mathfrak{B}(\mathfrak{B}(g))$. It is easy to see that $\mathfrak{B}(\mathfrak{B}(f))(1) = 1 = \mathfrak{B}(\mathfrak{B}(g))(1)$, 
$\frac{d\mathfrak{B}(\mathfrak{B}(f))}{dt}|_{t=1} =\frac{1}{2} = \frac{d\mathfrak{B}(\mathfrak{B}(g))}{dt}|_{t=1}$,
and $\mathfrak{B}(\mathfrak{B}(f))(\frac{1}{2}) = \frac{5}{7} = \mathfrak{B}(\mathfrak{B}(g))(\frac{1}{2})$.
\end{remark}
\subsection{Eigenvalue inequalities}

In this part, we aim to obtain extended natural eigenvalue versions for the Ky Fan type inequalities (\ref{ik})--(\ref{ihhh}). Also, we discuss their equality conditions. \par
Throughout this part, let $(\mathscr{H}, \langle\cdotp,\cdotp\rangle)$ denote a complex separable Hilbert space. For a
compact positive operator $A\in\mathbb{B}^+(\mathscr{H})$, let $\lambda_1(A)\geq\lambda_2(A)\geq\cdots\geq0$ denote the eigenvalues of $A$
arranged in decreasing order and repeated according to their multiplicity. To prove eigenvalue inequalities, the following lemmas will be used in what follows. 
\begin{lemma}[{\cite{HK}}]\label{p1}
If $A,B\in\mathbb{B}^+(\mathscr{H})$ are positive such that $A$ is compact and $A\geq B$, then so is $B$.
\end{lemma}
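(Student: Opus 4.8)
The plan is to reduce the compactness of $B$ to the compactness of its positive square root and then transfer the operator inequality $0\le B\le A$ to the level of square roots. Since $B\ge 0$, the positive square root $B^{1/2}$ is well defined, $B=B^{1/2}\cdot B^{1/2}$, and the compact operators form a closed two-sided ideal of $\mathbb{B}(\mathscr{H})$. Hence it suffices to show that $B^{1/2}$ is compact, for then $B$, being a product of $B^{1/2}$ with the bounded operator $B^{1/2}$, is compact as well. (That $B$ is bounded is automatic, either from $B\in\mathbb{B}^+(\mathscr{H})$ or from $0\le B\le A$ with $A$ bounded.)

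The key elementary observation is that $A\ge B$ gives, for every $x\in\mathscr{H}$,
\[
\|B^{1/2}x\|^2=\langle Bx,x\rangle\le\langle Ax,x\rangle=\|A^{1/2}x\|^2 .
\]
I would then invoke the characterization of compactness by weak convergence: an operator $T\in\mathbb{B}(\mathscr{H})$ is compact if and only if it maps every weakly null sequence to a norm null sequence. The operator $A^{1/2}$ is compact: by the spectral theorem for compact self-adjoint operators $A=\sum_n\lambda_n\langle\,\cdot\,,e_n\rangle e_n$ with $\lambda_n\to 0$, so $A^{1/2}=\sum_n\lambda_n^{1/2}\langle\,\cdot\,,e_n\rangle e_n$ is a norm limit of finite-rank operators (equivalently, $A^{1/2}=h(A)$ for the continuous function $h(t)=\sqrt{t}$ with $h(0)=0$). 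Now take any bounded sequence with $x_n\rightharpoonup 0$ weakly. Compactness of $A^{1/2}$ forces $\|A^{1/2}x_n\|\to 0$, and the displayed inequality then yields $\|B^{1/2}x_n\|\to 0$. Thus $B^{1/2}$ carries weakly null sequences to norm null sequences, so $B^{1/2}$ is compact, and the proof is complete.

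The work here is mainly in assembling standard facts cleanly rather than overcoming a genuine obstacle; the two points that must be invoked correctly are the compactness of $A^{1/2}$ (via the spectral theorem for compact positive operators) and the weakly-null characterization of compactness. I expect the only subtlety to be making these two steps rigorous while keeping the argument self-contained. As an alternative that sidesteps the weak-convergence characterization, one could apply the Douglas factorization lemma: writing $B=(B^{1/2})^{*}B^{1/2}$ and $A=(A^{1/2})^{*}A^{1/2}$, the inequality $B\le A$ yields a contraction $C$ with $B^{1/2}=A^{1/2}C$, whence $B^{1/2}$ is compact as a product of the compact operator $A^{1/2}$ with the bounded operator $C$, and $B$ is compact as before.
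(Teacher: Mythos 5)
Your argument is correct. Note that the paper itself offers no proof of this lemma: it is stated as a quoted result with a citation to Hirzallah--Kittaneh, so there is no in-paper argument to compare against; you are supplying a proof where the authors rely on a reference. Both of your routes are sound. The inequality $\|B^{1/2}x\|^2=\langle Bx,x\rangle\le\langle Ax,x\rangle=\|A^{1/2}x\|^2$ is the right reduction, the compactness of $A^{1/2}=h(A)$ with $h(t)=\sqrt{t}$, $h(0)=0$, is legitimate (norm limit of polynomials in $A$ without constant term, hence in the closed ideal of compact operators), and the passage from ``maps weakly null sequences to norm null sequences'' back to compactness is valid here because the closed unit ball of a Hilbert space is weakly sequentially compact, so complete continuity implies compactness. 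The Douglas factorization alternative, $B^{1/2}=A^{1/2}C$ with $C$ a contraction, is cleaner still and avoids the weak-convergence characterization entirely; either version is a complete and self-contained proof of the lemma.
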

\begin{lemma}[{\cite[p. 26]{G}}]\label{p2}
If $A,B\in\mathbb{B}^+(\mathscr{H})$ are positive operators such that $A$ is compact and $A\geq B$, then $\lambda_j(A)\geq\lambda_j(B)$ for all $j=1,2,\ldots$.
\end{lemma}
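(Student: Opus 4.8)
The plan is to derive the eigenvalue comparison from the variational (Courant--Fischer--Weyl) characterization of the eigenvalues of a compact positive operator. First I would observe that by Lemma \ref{p1}, the compactness of $A$ together with $0\le B\le A$ forces $B$ to be compact as well; hence both $A$ and $B$ possess a decreasing sequence of eigenvalues $\lambda_1\ge\lambda_2\ge\cdots\ge0$ tending to $0$, and the symbols $\lambda_j(A)$, $\lambda_j(B)$ are well defined in the sense fixed at the start of this subsection.

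The key tool is the max--min formula: for a compact positive operator $T\in\mathbb{B}^+(\mathscr{H})$ and each $j\ge1$,
\begin{equation*}
\lambda_j(T)=\max_{\dim M=j}\ \min_{\substack{x\in M\\ x\neq0}}\frac{\langle Tx,x\rangle}{\|x\|^2},
\end{equation*}
where the maximum runs over all $j$-dimensional subspaces $M\subseteq\mathscr{H}$, and it is attained on the span of the first $j$ eigenvectors of $T$. I would take this identity as the infinite-dimensional substitute for the finite-dimensional Courant--Fischer theorem; it is precisely here that compactness enters, since it guarantees that the essential spectrum of $T$ reduces to $\{0\}$ and that the strictly positive part of the spectrum consists of isolated eigenvalues of finite multiplicity, so that the variational extremum is genuinely achieved.

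With this formula in hand the argument is immediate. The hypothesis $A\ge B$ means $\langle Ax,x\rangle\ge\langle Bx,x\rangle$ for every $x\in\mathscr{H}$, so for any fixed $j$-dimensional subspace $M$ the Rayleigh quotients satisfy
\begin{equation*}
\min_{\substack{x\in M\\ x\neq0}}\frac{\langle Ax,x\rangle}{\|x\|^2}\ \ge\ \min_{\substack{x\in M\\ x\neq0}}\frac{\langle Bx,x\rangle}{\|x\|^2}.
\end{equation*}
Taking the maximum over all such $M$ and invoking the max--min formula for both operators yields $\lambda_j(A)\ge\lambda_j(B)$ for every $j$, as claimed.

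The main obstacle I anticipate is not the comparison step, which is purely formal once the variational principle is available, but rather the clean justification of the max--min formula itself in the infinite-dimensional setting, together with the edge cases in which $B$ (or $A$) has only finitely many nonzero eigenvalues. In that situation $\lambda_j(B)=0$ for all large $j$, and the asserted inequality holds trivially since $\lambda_j(A)\ge0$; one must merely confirm that the variational formula indeed returns $0$ for those indices. As the result is classical (see \cite[p.\ 26]{G}), I would either cite the min--max principle for compact self-adjoint operators directly or reconstruct it from the spectral decomposition of $T$; in either route the entire content of the proof is the one-line monotonicity of the Rayleigh quotient inherited from $A\ge B$.
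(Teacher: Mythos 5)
The paper offers no proof of this lemma at all; it is quoted verbatim from Gohberg--Krein (p.~26), whose argument is exactly the max--min (Courant--Fischer--Weyl) characterization you invoke. Your proof is correct, including the observation that compactness of $B$ follows from Lemma~\ref{p1} and the handling of the edge case where only finitely many eigenvalues are nonzero, so it is essentially the same approach as the cited source.
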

This fact is known as the Weyl's monotonicity principle for compact positive operators.\\
It is easily seen that if $A\in\mathbb{B}(\mathscr{H})$ and $\lambda\in\mathbb{C}\backslash\{0\}$, then $\dim\ker(A^*A-\lambda I)=\dim\ker(AA^*-\lambda I)$.
This together with the basic fact when $\lambda=0$, conclude the following lemma.
\begin{lemma}\label{p3}
Let $A\in\mathbb{B}(\mathscr{H})$. If $A$ is compact, or one of the operators $A^*A- I$ or $AA^*- I$ is compact and positive, then $\lambda_j(A^*A)=\lambda_j(AA^*)$ for all $j=1,2,\ldots$.
\end{lemma}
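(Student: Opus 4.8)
The plan is to reduce everything to a single elementary observation about how $A$ transports eigenvectors between $A^{*}A$ and $AA^{*}$, and then to check that the only eigenvalue at which the two operators can genuinely disagree---namely $0$---never affects the ordered sequence $\lambda_j$ under any of the three hypotheses.

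First I would record the fact quoted just before the statement. Fix $\lambda\neq0$ and suppose $A^{*}Ax=\lambda x$ with $x\neq0$. Then $AA^{*}(Ax)=A(A^{*}Ax)=\lambda(Ax)$, and $Ax\neq0$ because $\|Ax\|^{2}=\langle A^{*}Ax,x\rangle=\lambda\|x\|^{2}\neq0$. Thus $A$ carries $\ker(A^{*}A-\lambda I)$ into $\ker(AA^{*}-\lambda I)$, injectively since $\|Ax\|^{2}=\lambda\|x\|^{2}$; the same computation with $A^{*}$ in place of $A$ supplies the reverse map, and the two compositions are multiplication by $\lambda\neq0$. Hence $\dim\ker(A^{*}A-\lambda I)=\dim\ker(AA^{*}-\lambda I)$ for every $\lambda\neq0$, so $A^{*}A$ and $AA^{*}$ share every nonzero eigenvalue with the same multiplicity. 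The only eigenvalue whose multiplicity may differ is $0$: indeed $\dim\ker(A^{*}A)=\dim\ker A$ while $\dim\ker(AA^{*})=\dim\ker A^{*}$, and these need not agree.

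The remaining task is bookkeeping: I must verify that this possible discrepancy at $0$ is invisible to the decreasing enumeration $\lambda_1\geq\lambda_2\geq\cdots$, in each of the stated cases. If $A$ is compact, then $A^{*}A$ and $AA^{*}$ are compact and positive and, by the previous paragraph, carry the same multiset of positive eigenvalues, hence the same number of them counted with multiplicity; listing these in decreasing order yields identical initial segments, after which both sequences are constantly $0$, so $\lambda_j(A^{*}A)=\lambda_j(AA^{*})$ for all $j$ no matter how the kernels compare. If instead $A^{*}A-I$ is compact and positive, write $A^{*}A=I+K$ with $K\geq0$ compact, so every eigenvalue of $A^{*}A$ is at least $1$. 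Applying the first paragraph at each $\lambda\geq1$---crucially including $\lambda=1$, which is nonzero---shows that $AA^{*}$ has exactly the same eigenvalues $\geq1$ with the same multiplicities. In infinite dimensions these eigenvalues $\geq1$ are infinitely many (with multiplicity) and hence fill the entire decreasing enumeration before any value below $1$, in particular before the possibly extra eigenvalue $0$ of $AA^{*}$, can be reached; in finite dimensions $A^{*}A$ and $AA^{*}$ have identical eigenvalues outright. Either way $\lambda_j(A^{*}A)=\lambda_j(AA^{*})$ term by term. The case in which $AA^{*}-I$ is compact and positive follows by interchanging the roles of $A$ and $A^{*}$.

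The one genuinely delicate point is this last bookkeeping at the eigenvalue $0$: a priori $A^{*}A$ and $AA^{*}$ may have kernels of different dimension (the unilateral shift is the standard warning, with $A^{*}A=I$ but $AA^{*}=I-P$ for a rank-one projection $P$), so one must argue that the surplus zeros never surface in $\lambda_j$. The clean resolution is precisely that $1\neq0$: the multiplicity of the eigenvalue $1$, and of every value above it, already agrees between the two operators, which pins down the whole ordered sequence in the non-compact cases and confines all possible disagreement to a tail of zeros lying strictly beyond every index the enumeration ever attains.
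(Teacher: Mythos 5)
Your proposal is correct and follows the same route the paper itself takes: the paper's entire justification is the sentence preceding the lemma, namely that $\dim\ker(A^*A-\lambda I)=\dim\ker(AA^*-\lambda I)$ for $\lambda\neq 0$ together with ``the basic fact when $\lambda=0$,'' and you have simply supplied the intertwining argument for the first claim and the careful bookkeeping at the eigenvalue $0$ for the second. Your explicit treatment of the unilateral-shift phenomenon and of why the surplus zeros never enter the decreasing enumeration is a welcome elaboration of what the paper leaves implicit.
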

\begin{lemma}[{\cite[p. 26]{G}}]\label{pi} 
Let $A,B\in\mathbb{B}^+(\mathscr{H})$ be compact positive operators such that $A\geq B$. Then $A = B$ if and only if $\lambda_j(A)=\lambda_j(B)$ for all $j=1,2,\ldots$.
\end{lemma}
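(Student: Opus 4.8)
The plan is to dispose of the trivial implication first and then concentrate on the substantive one. If $A=B$ then obviously $\lambda_j(A)=\lambda_j(B)$ for every $j$, so only the converse needs an argument. Assume henceforth that $A\geq B$ and that $\lambda_j(A)=\lambda_j(B)$ for all $j$, and set $C:=A-B$. Since $A$ and $B$ are compact and $A\geq B$, the operator $C$ is compact and positive; the goal is to prove $C=0$.

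The engine of the argument is Ky Fan's maximum principle: for a compact positive operator $T$ and each $k$,
\[
\sum_{j=1}^{k}\lambda_j(T)=\max\{\mathrm{tr}(TP):P\text{ a rank-}k\text{ orthogonal projection}\},
\]
the maximum being attained at the spectral projection of $T$ onto its $k$ largest eigenvalues (this refines Lemma \ref{p2}). Let $\{e_j\}$ be an orthonormal system of eigenvectors of $B$ with $Be_j=\lambda_j(B)e_j$ and $\lambda_j(B)>0$, and let $P_k$ be the projection onto $\mathrm{span}\{e_1,\dots,e_k\}$. Then $\mathrm{tr}(BP_k)=\sum_{j=1}^{k}\lambda_j(B)$, while $A\geq B$ forces $\mathrm{tr}(AP_k)\geq\mathrm{tr}(BP_k)$ and the principle applied to $A$ gives $\mathrm{tr}(AP_k)\leq\sum_{j=1}^{k}\lambda_j(A)$. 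Since $\sum_{j=1}^{k}\lambda_j(A)=\sum_{j=1}^{k}\lambda_j(B)$ by hypothesis, these inequalities collapse to $\mathrm{tr}(CP_k)=\mathrm{tr}(P_kCP_k)=0$. As $P_kCP_k$ is positive and of finite rank, it must vanish, whence $\langle Cx,x\rangle=0$ for every $x\in\mathrm{ran}(P_k)$; positivity of $C$ then yields $Cx=0$ there. Letting $k$ range over all admissible indices gives $Ce_j=0$ for every $j$ with $\lambda_j(B)>0$.

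The main obstacle is to pass from this to $C=0$, that is, to control $C$ on $\ker B$, about which the previous step says nothing directly. Here I would exploit that $C$ annihilates each $e_j$ to turn these into \emph{shared} eigenvectors: for every $j$ with $\lambda_j(B)>0$ one has $Ae_j=Be_j+Ce_j=\lambda_j(B)e_j=\lambda_j(A)e_j$. Thus $A$ possesses the orthonormal eigenvectors $e_j$ with eigenvalues $\lambda_j(A)$, and a multiplicity count shows they exhaust the positive spectrum of $A$: for each distinct positive eigenvalue $\mu$, the number of indices $j$ with $\lambda_j(B)=\mu$ is the multiplicity of $\mu$ in the common sequence $\lambda_j(A)=\lambda_j(B)$, which equals both $\dim\ker(B-\mu I)$ and $\dim\ker(A-\mu I)$, so those $e_j$ form an orthonormal basis of $\ker(A-\mu I)$. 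Consequently the closed span of $\{e_j\}$ is $(\ker A)^{\perp}$, while by construction it is $(\ker B)^{\perp}$; hence $\ker A=\ker B=:K$. On $K$ both operators vanish, and on $K^{\perp}$ we have $Ae_j=\lambda_j(B)e_j=Be_j$, so $A=B$ on $K^{\perp}$ as well, giving $A=B$. One could alternatively phrase this last step through the block decomposition of the self-adjoint operator $C$ with respect to $(\ker B)^{\perp}\oplus\ker B$, noting that its only possibly nonzero block is a compact positive operator on $\ker B$ whose presence would violate the equality of the eigenvalue-counting functions $\#\{j:\lambda_j(\cdot)\geq\varepsilon\}$.
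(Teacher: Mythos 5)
The paper does not prove this statement at all: it is imported verbatim from Gohberg--Krein \cite[p.~26]{G} as a known fact, so there is no in-paper argument to compare yours against. Your proof is, as far as I can check, correct and self-contained. The reduction to $\mathrm{tr}(CP_k)=0$ via Ky Fan's maximum principle is sound: with $P_k$ the projection onto the top $k$ eigenvectors of $B$ you get $\sum_{j\le k}\lambda_j(B)=\mathrm{tr}(BP_k)\le\mathrm{tr}(AP_k)\le\sum_{j\le k}\lambda_j(A)$, and equality of the partial sums squeezes out $\sum_{j\le k}\langle Ce_j,e_j\rangle=0$, whence $C^{1/2}e_j=0$ and $Ce_j=0$ by positivity. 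You are also right that the real content is the passage to $\ker B$, and your multiplicity count handles it: since the two eigenvalue sequences coincide, each positive eigenvalue $\mu$ has the same (finite) multiplicity for $A$ and for $B$, so the $e_j$ with $B e_j=\mu e_j$, which you have shown are also $\mu$-eigenvectors of $A$, already exhaust $\ker(A-\mu I)$; hence $(\ker A)^{\perp}=\overline{\mathrm{span}}\{e_j\}=(\ker B)^{\perp}$ and $A=B$ on both summands. The degenerate case $B=0$ is silently covered (then $\lambda_1(A)=0$ forces $A=0$), and the finite-rank sums $\mathrm{tr}(AP_k)$, $\mathrm{tr}(BP_k)$ are well defined even when $A,B$ are not trace class, so no convergence issues arise. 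A shorter route, closer in spirit to how such facts are usually extracted from \cite{G}, is to note that $\langle Ce_j,e_j\rangle=\langle Ae_j,e_j\rangle-\lambda_j(B)\ge 0$ and then sum against the eigenvalue identities; but your version is perfectly adequate, and supplying an actual proof here is a genuine addition relative to the paper, which only cites the result.
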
 
Let $A,B\in\mathbb{B}^+(\mathscr{H})$ be such that $A-B$ is compact. Let $\sigma$ be an operator mean associated with an operator monotone function $f$ on $[0,\infty)$ such that $f'(1) = \mu$. So considering (\ref{th-ex-lem-f}), we infer that the operators $A\nabla_\mu B-A\sigma B$ and $A\nabla_{1-\mu} B-A\sigma^0 B$ are also compact. 
The positivity and compactness of the operators $(A\sigma B)^{-1}-(A\nabla_\mu B)^{-1}$ and $(A!_\mu B)^{-1}-(A\sigma^* B)^{-1}$ follow from (\ref{f22}) and (\ref{20}), respectively. These imply that $(A\nabla_\mu B)^{1/2}(A\sigma B)^{-1}(A\nabla_\mu B)^{1/2}-I$ and $(A\sigma^* B)^{1/2}(A!_\mu B)^{-1}(A\sigma^* B)^{1/2}-I$ are positive and compact.
Further, we conclude positivity and compactness of operators $(A\sigma B)^{-1/2}(A\nabla_\mu B) (A\sigma B)^{-1/2}-I$ and $(A!_\mu B)^{-1/2}(A\sigma^* B) (A!_\mu B)^{-1/2}-I$, by (\ref{th-ex-lem-f}) and (\ref{211}). So, the eigenvalues of operators
$(A\nabla_\mu B)^{1/2}(A\sigma B)^{-1}(A\nabla_\mu B)^{1/2}$, $(A\sigma^* B)^{1/2}(A!_\mu B)^{-1}(A\sigma^* B)^{1/2}$, and $(A\sigma B)^{-1/2}(A\nabla_\mu B) (A\sigma B)^{-1/2}$ as well as the eigenvalues of operator $(A!_\mu B)^{-1/2}(A\sigma^* B) (A!_\mu B)^{-1/2}$ are numerable and the multiplicities of their eigenvalues, which are greater than $1$, are finite. Therefore, we can again arrange their eigenvalues in decreasing order according to their multiplicities. Similar assertions hold if we replace $A$ and $B$ by $A'$ and $B'$, respectively. 
First, we are going to prove eigenvalue inequalities for the reciprocal additive Ky Fan type inequalities (\ref{ik})--(\ref{ikkk}).
\begin{theorem}\label{69}
Let $f:[0,\infty)\rightarrow[0,\infty)$ be an operator monotone function with $f (1) = 1$, $f'(1)=\mu$, and associated mean $\sigma$. If $A, B\in\mathbb{B}^+(\mathscr{H})$ such that $0<A,B\leq\frac{1}{2}I$ and $A-B$ is compact, then for all $j=1,2,\ldots$,	
\begin{align}
&\lambda_j((A'\sigma B')^{-1}-(A'\nabla_\mu B')^{-1})\leq\lambda_j((A\sigma B)^{-1}-(A\nabla_\mu B)^{-1}),\label{h00}\\
&\lambda_j((A'!_\mu B')^{-1}-(A'\sigma^* B')^{-1})\leq\lambda_j((A!_\mu B)^{-1}-(A\sigma^* B)^{-1}).\label{h001}
\end{align}
Moreover, when $f$ is non-linear, equality holds in each of the above systems of inequalities if and only if $A = B$.
\end{theorem}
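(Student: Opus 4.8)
The plan is to prove the two systems (\ref{h00}) and (\ref{h001}) separately, in each case reducing the claim to a chain of Weyl monotonicity comparisons (Lemma \ref{p2}) driven by an \emph{operator} additive inequality that is already in hand, namely Theorem \ref{68}. Since the example following Theorem \ref{68} shows that the corresponding L\"owner inequalities fail, one cannot derive the $\lambda_j$-inequalities from a single operator inequality between the two sides; instead the eigenvalues must be shepherded through intermediate operators by means of the $C^*C$--$CC^*$ identity of Lemma \ref{p3}. Throughout I write $P=A\sigma B$, $Q=A\nabla_\mu B$, $D=Q-P=A\nabla_\mu B-A\sigma B\ge0$, and $P',Q',D'$ for the same expressions with $A',B'$ in place of $A,B$; all operators below are positive and compact by the discussion preceding the theorem, and I abbreviate $X=(A\sigma B)^{-1}-(A\nabla_\mu B)^{-1}$, with $X'$ its primed analogue.

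For (\ref{h00}) I would first record the geometric consequence of the Ky Fan hypothesis $0<A,B\le\frac12 I$: since $A\nabla_\mu B$ is a convex combination of $A,B\le\frac12 I$ we get $Q\le\frac12 I$, while $A',B'\ge\frac12 I$ together with the homogeneity and monotonicity of $\sigma$ give $P'=A'\sigma B'\ge(\tfrac12 I)\sigma(\tfrac12 I)=\tfrac12 I$. Hence $0\le Q\le\frac12 I\le P'$, so $Q^2\le\frac14 I\le P'^2$ and therefore $P'^{-2}\le Q^{-2}$. Then I chain, using at each step that the operators are compact positive so Weyl monotonicity applies:
\begin{align*}
\lambda_j(X')&\le\lambda_j\big(P'^{-1}D'P'^{-1}\big)\le\lambda_j\big(P'^{-1}DP'^{-1}\big)=\lambda_j\big(D^{1/2}P'^{-2}D^{1/2}\big)\\
&\le\lambda_j\big(D^{1/2}Q^{-2}D^{1/2}\big)=\lambda_j\big(Q^{-1}DQ^{-1}\big)\le\lambda_j(X).
\end{align*}
The first and last inequalities are the right and left halves of (\ref{f22}) (for $A',B'$ and for $A,B$, respectively); the second uses $D'\le D$ from Theorem \ref{68} under congruence by $P'^{-1}$; the two equalities are instances of Lemma \ref{p3} with $C=D^{1/2}P'^{-1}$ and $C=D^{1/2}Q^{-1}$; and the middle inequality is the congruence by $D^{1/2}$ of $P'^{-2}\le Q^{-2}$.

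For (\ref{h001}) the argument is cleaner, since the operator in question is already a congruence. Identity (\ref{20}) gives $Y:=(A!_\mu B)^{-1}-(A\sigma^* B)^{-1}=S^{-1}GS^{-1}$ with $S=A\sharp B$ and $G=A\nabla_{1-\mu}B-A\sigma^0 B$, and similarly $Y'=S'^{-1}G'S'^{-1}$. Applying Theorem \ref{68} to the representing function $h(t)=tf(t^{-1})$ of $\sigma^0$ (for which $h(1)=1$, $h'(1)=1-\mu$, and which is non-linear whenever $f$ is, by Remark \ref{ddd}) yields the operator inequality $G'\le G$. Since $S=A\sharp B\le A\nabla B\le\frac12 I\le S'=A'\sharp B'$, exactly as before $S'^{-2}\le S^{-2}$, and I chain
\begin{align*}
\lambda_j(Y')=\lambda_j\big(S'^{-1}G'S'^{-1}\big)\le\lambda_j\big(S'^{-1}GS'^{-1}\big)=\lambda_j\big(G^{1/2}S'^{-2}G^{1/2}\big)\le\lambda_j\big(G^{1/2}S^{-2}G^{1/2}\big)=\lambda_j(Y),
\end{align*}
with the same justifications as in the first chain (congruence by $S'^{-1}$, Lemma \ref{p3}, and congruence by $G^{1/2}$).

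For the equality statement, the forward direction is immediate, as $A=B$ makes both sides vanish. For the converse, assume $f$ non-linear and equality for every $j$ in one of the systems; then the two ends of the relevant chain coincide for all $j$, so every intermediate inequality is an equality for all $j$. In particular the first link equates all eigenvalues of two compact positive operators ordered by $\le$, so Lemma \ref{pi} upgrades it to an operator equality: $X'=P'^{-1}D'P'^{-1}$ (equality in the right half of (\ref{f22}) for $A',B'$) in the first case, and $S'^{-1}G'S'^{-1}=S'^{-1}GS'^{-1}$, hence $G=G'$, in the second. The equality clause of the corollary following Proposition \ref{214} (resp. of Theorem \ref{68} applied to $\sigma^0$), available precisely because $f$ (resp. $h$) is non-linear, then forces $A=B$. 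The main obstacle, and the crux of the whole argument, is the middle step of each chain: the congruence reductions unavoidably produce the squares $P'^{-2},Q^{-2}$ (resp. $S'^{-2},S^{-2}$), and since $t\mapsto t^{-2}$ is not operator monotone, the order relation $Q\le P'$ alone does not yield $P'^{-2}\le Q^{-2}$. What rescues the proof is the scalar interposition $Q\le\frac12 I\le P'$ supplied by the constraint $A,B\le\frac12 I$, which lets one pass through the commuting multiple $\frac12 I$ and thereby square legitimately.
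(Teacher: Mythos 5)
Your proof is correct and follows essentially the same route as the paper: both rest on Proposition \ref{214}/(\ref{f22}), the operator inequality (\ref{p}) of Theorem \ref{68}, Weyl's monotonicity, the $C^*C$--$CC^*$ trick of Lemma \ref{p3}, and the scalar interposition $Q\leq\frac{1}{2}I\leq P'$ forced by $A,B\leq\frac{1}{2}I$ (the paper writes this as $P'^{-2}\leq 4I\leq Q^{-2}$, you merge the two scalar bounds into one congruence step, and you extract the equality condition from the first link of the chain rather than from $\lambda_j(D')=\lambda_j(D)$, but these are cosmetic differences). Your explicit chain for (\ref{h001}) via the exact identity (\ref{20}) and Theorem \ref{68} applied to the transpose $\sigma^0$ is precisely the ``similar argument'' the paper leaves to the reader.
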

\begin{proof}
For each $j=1,2,\ldots$, we have
\begin{align*}
&\lambda_j\big((A'\sigma B')^{-1}-(A'\nabla_\mu B')^{-1}\big)\\
&\leq\lambda_j\big((A'\sigma B')^{-1}\big(A'\nabla_\mu B'-A'\sigma B'\big)(A'\sigma B')^{-1}\big)\tag{by (\ref{f22})}
\\
&=\lambda_j\big(\big(A'\nabla_\mu B'-A'\sigma B'\big)^{\frac{1}{2}}(A'\sigma B')^{-2}\big(A'\nabla_\mu B'-A'\sigma B'\big)^{\frac{1}{2}}\big)\tag{by Lemma  \ref{p3}}\\
&\leq 4\lambda_j(A'\nabla_\mu B'-A'\sigma B')\tag{since $(A'\sigma B')^{-2}\leq4I$}
\\
&\leq 4\lambda_j(A\nabla_\mu B-A\sigma B)\tag{by (\ref{p}) and Lemma \ref{p2}}
\\
&\leq\lambda_j\big(\big(A\nabla_\mu B-A\sigma B\big)^{\frac{1}{2}}(A\nabla_\mu B)^{-2}\big(A\nabla_\mu B-A\sigma B\big)^{\frac{1}{2}}\big)\tag{since $4I\leq (A\nabla_\mu B)^{-2}$}
\\
&=\lambda_j\big((A\nabla_\mu B)^{-1}\big(A\nabla_\mu B-A\sigma B\big)(A\nabla_\mu B)^{-1}\big)\tag{by Lemma \ref{p3}}\\
&\leq\lambda_j\big((A\sigma B)^{-1}-(A\nabla_\mu B)^{-1}\big),\tag{by (\ref{f22})}
\end{align*}
and so the system of inequalities (\ref{h00}) holds.
Now, let $f$ be non-linear and equality holds in (\ref{h00}). Then, by the above computation, we get
\begin{equation*}
\lambda_j(A'\nabla_\mu B'-A'\sigma B')=\lambda_j(A\nabla_\mu B-A\sigma B),\qquad j=1,2,\ldots.
\end{equation*}
It follows from (\ref{p}) and Lemma \ref{pi}, 
\begin{equation*}
A'\nabla_\mu B'-A'\sigma B'=A\nabla_\mu B-A\sigma B,
\end{equation*}
which by Theorem \ref{68}, we conclude that $A=B$.\\
The system of inequalities (\ref{h001}) and its equality condition follows from (\ref{20}) by a similar argument. 
\end{proof}
\begin{corollary}
Let $0\leq\mu\leq1$. If $A, B\in\mathbb{B}^+(\mathscr{H})$ such that $0<A,B\leq\frac{1}{2}I$ and $A-B$ is compact,  
then for all $j=1,2,\ldots$,	
\begin{align*}
&\lambda_j((A'\sharp_\mu B')^{-1}-(A'\nabla_\mu B')^{-1})\leq\lambda_j((A\sharp_\mu B)^{-1}-(A\nabla_\mu B)^{-1}),\\
&\lambda_j((A'!_\mu B')^{-1}-(A'\sharp_\mu B')^{-1})\leq\lambda_j((A!_\mu B)^{-1}-(A\sharp_\mu B)^{-1}),\\
&\lambda_j((A'!_\mu B')^{-1}-(A'\nabla_\mu B')^{-1})\leq\lambda_j((A!_\mu B)^{-1}-(A\nabla_\mu B)^{-1}).
\end{align*}
Moreover, when $0 < \mu < 1$, equality holds in each of the above systems of inequalities if and only if $A = B$.
\end{corollary}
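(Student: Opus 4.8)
The plan is to obtain all three systems of inequalities, together with their equality statements, as direct specializations of Theorem \ref{69} for suitable choices of the operator monotone function $f$ (equivalently, of the mean $\sigma$). Recall that Theorem \ref{69} produces, for any operator monotone $f\colon[0,\infty)\to[0,\infty)$ with $f(1)=1$, $f'(1)=\mu$, and associated mean $\sigma$, the two systems (\ref{h00}) and (\ref{h001}), under exactly the hypotheses $0<A,B\le\frac12 I$ with $A-B$ compact. Since these hypotheses are inherited verbatim by the corollary, the only work is to select $f$ correctly in each case and read off the resulting comparison.

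For the first system I would take $f(t)=t^{\mu}$, which the preliminaries in Section 2 identify as the representing function of $\sharp_\mu$ and which satisfies $f(1)=1$ and $f'(1)=\mu$. With $\sigma=\sharp_\mu$, the system (\ref{h00}) reads precisely
\begin{equation*}
\lambda_j\big((A'\sharp_\mu B')^{-1}-(A'\nabla_\mu B')^{-1}\big)\le\lambda_j\big((A\sharp_\mu B)^{-1}-(A\nabla_\mu B)^{-1}\big),
\end{equation*}
which is the claimed first inequality. The second system instead calls for the companion system (\ref{h001}): keeping $f(t)=t^{\mu}$, hence $\sigma=\sharp_\mu$, and invoking the self-duality $(\sharp_\mu)^*=\sharp_\mu$ recorded in Section 2, the $\sigma^*$-terms in (\ref{h001}) collapse to $\sharp_\mu$, giving exactly the comparison between $(A'!_\mu B')^{-1}-(A'\sharp_\mu B')^{-1}$ and $(A!_\mu B)^{-1}-(A\sharp_\mu B)^{-1}$. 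For the third system I would take $f(t)=1!_\mu t=(1-\mu+\mu t^{-1})^{-1}$, the representing function of $!_\mu$, for which $f(1)=1$ and a short computation gives $f'(1)=\mu$; then $\sigma=!_\mu$ and (\ref{h00}) yields the desired inequality between $(A'!_\mu B')^{-1}-(A'\nabla_\mu B')^{-1}$ and $(A!_\mu B)^{-1}-(A\nabla_\mu B)^{-1}$.

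For the equality statements, which are asserted only for $0<\mu<1$, I would invoke the equality clause of Theorem \ref{69}, valid whenever the chosen $f$ is non-linear. Both $t\mapsto t^{\mu}$ and $t\mapsto 1!_\mu t$ are strictly concave, hence non-linear, for $0<\mu<1$, so in each of the three cases equality throughout the system forces $A=B$, while the converse is immediate. The boundary values $\mu\in\{0,1\}$ are excluded here exactly because the relevant representing functions degenerate to linear (indeed affine) ones, matching the restriction already present in Theorem \ref{69}; for these values all the means in sight coincide and the inequalities reduce to the trivial $0\le 0$.

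There is no serious obstacle in this argument, since it is a bookkeeping specialization of Theorem \ref{69}; the only points meriting attention are, first, routing the second system through the companion system (\ref{h001}) and exploiting $(\sharp_\mu)^*=\sharp_\mu$ so that the $\sigma^*$-terms become $\sharp_\mu$, rather than attempting to extract it from (\ref{h00}); and second, realizing the third system from the non-linear harmonic representing function $1!_\mu t$ with $\sigma=!_\mu$, rather than from the tempting but degenerate choice $\sigma=\nabla_\mu$, which would make the left-hand difference in (\ref{h00}) vanish identically. The non-linearity of $t^{\mu}$ and of $1!_\mu t$ for $0<\mu<1$ is precisely what activates the equality clause of Theorem \ref{69} and hence delivers the condition $A=B$ for free.
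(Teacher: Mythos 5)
Your proposal is correct and matches the paper's intent exactly: the corollary is stated without proof precisely because it is the specialization of Theorem \ref{69} to $f(t)=t^{\mu}$ (using $(\sharp_\mu)^*=\sharp_\mu$ in (\ref{h001}) for the second system) and to $f(t)=1!_\mu t$ in (\ref{h00}) for the third, with the equality clause activated by the non-linearity of these representing functions when $0<\mu<1$. Your identification of the correct routing for each of the three systems, including avoiding the degenerate choice $\sigma=\nabla_\mu$, is exactly right.
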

Next, we are going to prove eigenvalue inequalities for the multiplicative Ky Fan type inequalities (\ref{ih})--(\ref{ihhh}). 
\begin{theorem}
Let $f:[0,\infty)\rightarrow[0,\infty)$ be an operator monotone function with $f (1) = 1$, $f'(1)=\mu$, and associated mean $\sigma$. If $A, B\in\mathbb{B}^+(\mathscr{H})$ such that $0<A,B\leq\frac{1}{2}I$ and $A-B$ is compact, then for all $j=1,2,\ldots$,
{\footnotesize \begin{align}
&\lambda_j\left((A'\sigma B')^{-\frac{1}{2}}(A'\nabla_\mu B')(A'\sigma B')^{-\frac{1}{2}}\right)\leq\lambda_j\left((A\sigma B)^{-\frac{1}{2}}(A\nabla_\mu B)(A\sigma B)^{-\frac{1}{2}}\right),\label{dd}\\
&\lambda_j\left((A'!_\mu B')^{-\frac{1}{2}}(A'\sigma^* B')(A'!_\mu B')^{-\frac{1}{2}}\right)\leq\lambda_j\left((A!_\mu B)^{-\frac{1}{2}}(A\sigma^* B)(A!_\mu B)^{-\frac{1}{2}}\right).\label{d3}
\end{align} }
Moreover, when $f$ is non-linear, equality holds in each of the above systems of inequalities if and only if $A = B$.
\end{theorem}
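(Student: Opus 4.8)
The plan is to handle the two systems (\ref{dd}) and (\ref{d3}) separately: the first is deduced from the Ky Fan difference inequality (\ref{p}) of Theorem \ref{68} by a Weyl-type eigenvalue chain, and the second is reduced to the first applied to the transpose mean $\sigma^0$.

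For (\ref{dd}), set $D':=A'\nabla_\mu B'-A'\sigma B'$ and $D:=A\nabla_\mu B-A\sigma B$; both are positive by (\ref{23}) and compact because $A-B$ is compact. Since
\[
(A'\sigma B')^{-\frac12}(A'\nabla_\mu B')(A'\sigma B')^{-\frac12}=I+(A'\sigma B')^{-\frac12}D'(A'\sigma B')^{-\frac12},
\]
and likewise with $A,B$ in place of $A',B'$, adding $I$ shifts every eigenvalue by $1$, so (\ref{dd}) is equivalent to the same inequality for the two compact positive operators $(A'\sigma B')^{-\frac12}D'(A'\sigma B')^{-\frac12}$ and $(A\sigma B)^{-\frac12}D(A\sigma B)^{-\frac12}$.

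Now the hypothesis $0<A,B\le\frac12 I$ gives $A',B'\ge\frac12 I$, whence monotonicity and positive homogeneity of $\sigma$ yield $A'\sigma B'\ge\frac12 I$, i.e. $(A'\sigma B')^{-1}\le 2I$; dually $A\sigma B\le A\nabla_\mu B\le\frac12 I$ by (\ref{23}), i.e. $(A\sigma B)^{-1}\ge 2I$. Using Lemma \ref{p3} to replace each $M^{-\frac12}NM^{-\frac12}$ by $N^{\frac12}M^{-1}N^{\frac12}$ (same eigenvalues), these two spectral bounds, the inequality $D'\le D$ from (\ref{p}), and Weyl monotonicity (Lemma \ref{p2}), I obtain
\[
\lambda_j\big((A'\sigma B')^{-\frac12}D'(A'\sigma B')^{-\frac12}\big)\le 2\lambda_j(D')\le 2\lambda_j(D)\le\lambda_j\big((A\sigma B)^{-\frac12}D(A\sigma B)^{-\frac12}\big),
\]
which is (\ref{dd}). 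If $f$ is non-linear and equality holds for every $j$, then the chain collapses, forcing $\lambda_j(D')=\lambda_j(D)$ for all $j$; since $D'\le D$ are compact positive, Lemma \ref{pi} gives $D'=D$, and the equality clause of Theorem \ref{68} then yields $A=B$.

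For (\ref{d3}) I would recognize the two sandwiched operators as the $\sigma^0$-analogues of those in (\ref{dd}). Applying (\ref{21}) to $\sigma$ and to $\nabla_\mu$ (whose transpose is $\nabla_{1-\mu}$) gives $A\sigma^* B=(A\sharp B)(A\sigma^0 B)^{-1}(A\sharp B)$ and $A!_\mu B=(A\sharp B)(A\nabla_{1-\mu}B)^{-1}(A\sharp B)$, so that $(A!_\mu B)^{-1}(A\sigma^* B)$ is similar, via $A\sharp B$, to $(A\nabla_{1-\mu}B)(A\sigma^0 B)^{-1}$. Composing with similarities by square roots, the operators $(A!_\mu B)^{-\frac12}(A\sigma^* B)(A!_\mu B)^{-\frac12}$ and $(A\sigma^0 B)^{-\frac12}(A\nabla_{1-\mu}B)(A\sigma^0 B)^{-\frac12}$ have the same eigenvalues, and the same identity holds with $A',B'$ replacing $A,B$. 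The transpose $\sigma^0$ has representing function $h(t)=tf(t^{-1})$ with $h(1)=1$ and $h'(1)=1-\mu$, so (\ref{d3}) is precisely (\ref{dd}) applied to $h$ with weight $1-\mu$. For equality, Remark \ref{ddd} ensures $h$ is non-linear whenever $f$ is, so the equality clause of (\ref{dd}) forces $A=B$.

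The main obstacle is twofold. First, the eigenvalue chain works only because the factor $2$ produced by $(A'\sigma B')^{-1}\le 2I$ on the primed side is exactly cancelled by $(A\sigma B)^{-1}\ge 2I$ on the unprimed side; this balance is where the constraint $A,B\le\frac12 I$ is indispensable, and one must simultaneously keep track of compactness so that Lemmas \ref{p2}, \ref{p3} and \ref{pi} genuinely apply at every step. Second, for (\ref{d3}) the decisive point is to spot the congruence by $A\sharp B$ in (\ref{21}) that turns the pair $(\sigma^*,!_\mu)$ into the transpose pair $(\nabla_{1-\mu},\sigma^0)$; this spares us from proving a fresh Ky Fan difference inequality for $A\sigma^* B-A!_\mu B$ directly and lets us recycle (\ref{dd}) verbatim.
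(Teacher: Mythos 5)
Your proposal is correct, and it splits naturally into a part that coincides with the paper and a part that takes a genuinely shorter route. For (\ref{dd}) your chain
\[
\lambda_j\bigl(D'^{1/2}(A'\sigma B')^{-1}D'^{1/2}\bigr)\le 2\lambda_j(D')\le 2\lambda_j(D)\le\lambda_j\bigl(D^{1/2}(A\sigma B)^{-1}D^{1/2}\bigr)
\]
is essentially the paper's argument: the same pivots $(A'\sigma B')^{-1}\le 2I\le (A\sigma B)^{-1}$, the same use of Lemma \ref{p3} to flip $M^{-1/2}NM^{-1/2}$ into $N^{1/2}M^{-1}N^{1/2}$, and the same appeal to (\ref{p}), Lemma \ref{p2} and Lemma \ref{pi}; the only difference is that you work with $D=A\nabla_\mu B-A\sigma B$ directly, whereas the paper first rewrites $D$ via the $\tau$-representation of Theorem \ref{cc} and threads $(A-B)(A\tau B)^{-1}(A-B)$ through the chain --- an extra detour that buys nothing here, so your version is cleaner. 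For (\ref{d3}) you genuinely diverge: the paper reproves the whole eigenvalue chain from scratch, inserting the congruence (\ref{21}) by $A\sharp B$ and the identity (\ref{20}) at each stage, while you use (\ref{21}) once to identify $(A!_\mu B)^{-\frac12}(A\sigma^* B)(A!_\mu B)^{-\frac12}$ spectrally with $(A\sigma^0 B)^{-\frac12}(A\nabla_{1-\mu}B)(A\sigma^0 B)^{-\frac12}$ and then quote (\ref{dd}) for the transpose function $h(t)=tf(t^{-1})$, with Remark \ref{ddd} guaranteeing that $h$ is non-linear exactly when $f$ is, so the equality clause transfers too. This reduction halves the work and makes the structural reason for (\ref{d3}) transparent. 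The one point you should make explicit is how "similar operators have the same eigenvalues" is justified in this infinite-dimensional setting: Lemma \ref{p3} as stated only covers the $X^*X$ versus $XX^*$ situation, so either argue that similar self-adjoint operators of the form $I+K$ with $K$ compact positive have identical decreasing eigenvalue sequences (true, since similarity preserves eigenspace dimensions), or realize your similarity as two successive $X^*X$/$XX^*$ swaps, e.g.\ via $Y=(A\nabla_{1-\mu}B)^{1/2}(A\sharp B)^{-1}(A\sigma^* B)^{1/2}$, checking the compactness hypotheses as the paper does in its preamble to this subsection; either way this is a presentational detail, not a gap.
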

\begin{proof}
For each $j=1,2,\ldots$, we have
{\footnotesize\begin{align*}
&\lambda_j\left((A'\sigma B')^{-\frac{1}{2}}(A'\nabla_\mu B')(A'\sigma B')^{-\frac{1}{2}}\right)\\
&=\lambda_j\left((A'\sigma B')^{-\frac{1}{2}}(A'\nabla_\mu B')(A'\sigma B')^{-\frac{1}{2}}-I\right)+1
\\
&=-\frac{f''(1)}{2}\lambda_j\left((A'\sigma B')^{-\frac{1}{2}}(A-B)(A'\tau B')^{-1}(A-B)(A'\sigma B')^{-\frac{1}{2}}\right)+1\tag{by Theorem \ref{cc}}
\\
&=-\frac{f''(1)}{2}\lambda_j\left((A'\tau B')^{-\frac{1}{2}}(A-B)(A'\sigma B')^{-1}(A-B)(A'\tau B')^{-\frac{1}{2}}\right)+1\tag{by Lemma \ref{p3}}
\\
&\leq-f''(1)\lambda_j\left((A'\tau B')^{-\frac{1}{2}}(A-B)^2(A'\tau B')^{-\frac{1}{2}}\right)+1\tag{since $(A'\sigma B')^{-1}\leq 2I$}
\\
&=2\lambda_j\left(-\frac{f''(1)}{2}(A-B)(A'\tau B')^{-1}(A-B)\right)+1\tag{by Lemma \ref{p3}}\\
&=2\lambda_j\left(A'\nabla_\mu B'-A'\sigma B'\right)+1\tag{by Theorem \ref{cc}}
\\
&\leq2\lambda_j\left(A\nabla_\mu B-A\sigma B\right)+1\tag{by (\ref{p}) and Lemma \ref{p2}}
\\
&=2\lambda_j\left(-\frac{f''(1)}{2}(A-B)(A\tau B)^{-1}(A-B)\right)+1\tag{by Theorem \ref{cc}}\\
&=-f''(1)\lambda_j\left((A\tau B)^{-\frac{1}{2}}(A-B)^2(A\tau B)^{-\frac{1}{2}}\right)+1\tag{by Lemma \ref{p3}}\\
&\leq-\frac{f''(1)}{2}\lambda_j\left((A\tau B)^{-\frac{1}{2}}(A-B)(A\sigma B)^{-1}(A-B)(A\tau B)^{-\frac{1}{2}}\right)+1\tag{since $2I\leq(A\sigma B)^{-1}$}\\
&=-\frac{f''(1)}{2}\lambda_j\left((A\sigma B)^{-\frac{1}{2}}(A-B)(A\tau B)^{-1}(A-B)(A\sigma B)^{-\frac{1}{2}}\right)+1\tag{by Lemma \ref{p3}}\\
&=\lambda_j\left((A\sigma B)^{-\frac{1}{2}}(A\nabla_\mu B)(A\sigma B)^{-\frac{1}{2}}-I\right)+1\tag{by Theorem \ref{cc}}\\
&=\lambda_j\left((A\sigma B)^{-\frac{1}{2}}(A\nabla_\mu B)(A\sigma B)^{-\frac{1}{2}}\right).
\end{align*}}
Thus, the system of inequalities (\ref{dd}) holds.
Now, let $f$ be non-linear and equality holds in (\ref{dd}). Then, by the above computation, we get
\begin{equation*}
\lambda_j(A'\nabla_{\mu} B'-A'\sigma B')=\lambda_j(A\nabla_{\mu} B-A\sigma B),\qquad j=1,2,\ldots,
\end{equation*}
which by the same reasoning as in the proof of Theorem \ref{69}, we ensure that $A = B$.\\
Next, for each $j=1,2,\ldots$, we have	
{\footnotesize\begin{align*}
&\lambda_j\big((A'!_{\mu} B')^{-\frac{1}{2}}(A'\sigma^* B')(A'!_{\mu} B')^{-\frac{1}{2}}\big)\\
&=\lambda_j\big((A'\sigma^* B')^{\frac{1}{2}}(A'!_\mu B')^{-1}(A'\sigma^* B')^{\frac{1}{2}}\big)\tag{by Lemma \ref{p3}}\\
&=\lambda_j\big((A'\sigma^* B')^{\frac{1}{2}}(A'!_\mu B')^{-1}(A'\sigma^* B')^{\frac{1}{2}}-I\big)+1\\
&=\lambda_j\left((A'\sigma^* B')^{\frac{1}{2}}(A'\sharp B')^{-1}(A'\nabla_{1-\mu} B'-A'\sigma^0 B')(A'\sharp B')^{-1}(A'\sigma^* B')^{\frac{1}{2}}\right)+1\tag{by (\ref{20})}\\	
&=\lambda_j\left((A'\nabla_{1-\mu} B'-A'\sigma^0 B')^{\frac{1}{2}}(A'\sharp B')^{-1}(A'\sigma^* B')(A'\sharp B')^{-1}(A'\nabla_{1-\mu} B'-A'\sigma^0 B')^{\frac{1}{2}}\right)+1\tag{by Lemma \ref{p3}}	
\\
&=\lambda_j\left((A'\nabla_{1-\mu} B'-A'\sigma^0 B')^{\frac{1}{2}}(A'\sigma^0 B')^{-1}(A'\nabla_{1-\mu} B'-A'\sigma^0 B')^{\frac{1}{2}}\right)+1\tag{by (\ref{21})}
\\
&\leq2\lambda_j(A'\nabla_{1-\mu} B'-A'\sigma^0 B')+1\tag{since $(A'\sigma^0 B')^{-1}\leq 2 I$}	
\\
&\leq2\lambda_j(A\nabla_{1-\mu} B-A\sigma^0 B)+1\tag{by (\ref{p}) and Lemma \ref{p2}}
\\
&\leq\lambda_j\left((A\nabla_{1-\mu} B-A\sigma^0 B)^{\frac{1}{2}}(A\sigma^0 B)^{-1}(A\nabla_{1-\mu} B-A\sigma^0 B)^{\frac{1}{2}}\right)+1\tag{since  $2I\leq(A\sigma^0 B)^{-1}$}	
\\
&=\lambda_j\left((A\nabla_{1-\mu} B-A\sigma^0 B)^{\frac{1}{2}}(A\sharp B)^{-1}(A\sigma^* B)(A\sharp B)^{-1}(A\nabla_{1-\mu} B-A\sigma^0 B)^{\frac{1}{2}}\right)+1\tag{by (\ref{21})}
\\
&=\lambda_j\left((A\sigma^* B)^{\frac{1}{2}}(A\sharp B)^{-1}(A\nabla_{1-\mu} B-A\sigma^0 B)(A\sharp B)^{-1}(A\sigma^* B)^{\frac{1}{2}}\right)+1\tag{by Lemma \ref{p3}}
\\
&=\lambda_j\big((A\sigma^* B)^{\frac{1}{2}}(A!_\mu B)^{-1}(A\sigma^* B)^{\frac{1}{2}}-I\big)+1\tag{by (\ref{20})}\\
&=\lambda_j\big((A\sigma^* B)^{\frac{1}{2}}(A!_\mu B)^{-1}(A\sigma^* B)^{\frac{1}{2}}\big)\\
&=\lambda_j\big((A!_{\mu} B)^{-\frac{1}{2}}(A\sigma^*B)(A!_{\mu} B)^{-\frac{1}{2}}\big).\tag{by Lemma \ref{p3}}
\end{align*}}
Thus, the system of inequalities (\ref{d3}) holds.
Now, let $f$ be non-linear and equality holds in (\ref{d3}). Then by the above computation, we get
\begin{equation*}
\lambda_j(A'\nabla_{1-\mu} B'-A'\sigma^0 B')=\lambda_j(A\nabla_{1-\mu} B-A\sigma^0 B),\qquad j=1,2,\ldots.
\end{equation*}
By the same reasoning as in the proof of Theorem \ref{69}, we conclude that $A=B$.
\end{proof}
\begin{corollary}
Let $0\leq\mu\leq1$. If $A, B\in\mathbb{B}^+(\mathscr{H})$ such that $0<A,B\leq\frac{1}{2}I$ and $A-B$ is compact, then for all $j=1,2,\ldots$,
{\footnotesize\begin{align*}
	&\lambda_j\left((A'\sharp_\mu B')^{-\frac{1}{2}}(A'\nabla_\mu B')(A'\sharp_\mu B')^{-\frac{1}{2}}\right)\leq\lambda_j\left((A\sharp_\mu B)^{-\frac{1}{2}}(A\nabla_\mu B)(A\sharp_\mu B)^{-\frac{1}{2}}\right),\\
	&\lambda_j\left((A'!_\mu B')^{-\frac{1}{2}}(A'\sharp_\mu B')(A'!_\mu B')^{-\frac{1}{2}}\right)\leq\lambda_j\left((A!_\mu B)^{-\frac{1}{2}}(A\sharp_\mu B)(A!_\mu B)^{-\frac{1}{2}}\right),\\
	&\lambda_j\left((A'!_\mu B')^{-\frac{1}{2}}(A'\nabla_\mu B')(A'!_\mu B')^{-\frac{1}{2}}\right)\leq\lambda_j\left((A!_\mu B)^{-\frac{1}{2}}(A\nabla_\mu B)(A!_\mu B)^{-\frac{1}{2}}\right).
	\end{align*}}
Moreover, when $0<\mu<1$, equality holds in each of the above systems of inequalities if and only if $A = B$.
\end{corollary}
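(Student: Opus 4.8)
The plan is to obtain all three systems of eigenvalue inequalities as immediate specializations of the two master inequalities (\ref{dd}) and (\ref{d3}) of the preceding theorem, choosing in each case an operator mean $\sigma$ for which the general expressions collapse to the desired geometric, arithmetic, or harmonic combinations. The standing hypotheses $0<A,B\leq\frac12 I$ and compactness of $A-B$ are exactly those of the theorem, so nothing extra must be checked there; the entire content of the argument is the bookkeeping of representing functions, their derivatives at $1$, and the relevant adjoint means.

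For the first system I would apply (\ref{dd}) with $\sigma=\sharp_\mu$, whose representing function is $f(t)=t^\mu$. Since $f'(1)=\mu$, the arithmetic term $A\nabla_{f'(1)}B$ appearing in (\ref{dd}) is precisely $A\nabla_\mu B$, and the inequality then reads verbatim as the first line of the corollary. For the remaining two systems I would invoke (\ref{d3}), in which the lower mean is $!_{f'(1)}$ and the upper mean is the adjoint $\sigma^*$. The key is to choose $\sigma$ so that $\sigma^*$ is the mean wanted on top while simultaneously $f'(1)=\mu$, guaranteeing that the lower mean $!_{f'(1)}$ is genuinely $!_\mu$. Taking $\sigma=\sharp_\mu$ again gives $f'(1)=\mu$, and the self-duality $(\sharp_\mu)^*=\sharp_\mu$ recorded in Section 2 produces the second line. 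Taking instead $\sigma=!_\mu$, whose representing function $(1-\mu+\mu t^{-1})^{-1}$ likewise satisfies $f'(1)=\mu$, together with the adjoint relation $(!_\mu)^*=\nabla_\mu$ (obtained from $\nabla_\mu^*=!_\mu$ and the involutivity of the adjoint), yields the third line.

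For the equality assertions in the range $0<\mu<1$, each chosen representing function—$t^\mu$ for the first two lines and $(1-\mu+\mu t^{-1})^{-1}$ for the third—is strictly concave, hence non-linear; consequently the non-linear equality clause of the theorem applies directly and forces $A=B$ in every case. The principal, though modest, obstacle is exactly this matching of parameters: one must confirm that for each selected $\sigma$ one has $f'(1)=\mu$ so that the lower harmonic mean in (\ref{d3}) really is $!_\mu$, and one must correctly identify the adjoints $(\sharp_\mu)^*$ and $(!_\mu)^*$. Once these identifications are in hand, the three inequalities and their equality conditions follow with no further computation.
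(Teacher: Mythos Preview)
Your proposal is correct and follows exactly the approach implicit in the paper: the corollary is obtained by specializing the preceding theorem with $\sigma=\sharp_\mu$ (for the first two lines, via (\ref{dd}) and (\ref{d3}) respectively, using $(\sharp_\mu)^*=\sharp_\mu$) and with $\sigma=!_\mu$ (for the third line). Note that the third line follows equally well from (\ref{dd}) with $\sigma=!_\mu$, since $f'(1)=\mu$ there too; your route through (\ref{d3}) and $(!_\mu)^*=\nabla_\mu$ is an equivalent alternative.
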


\textbf{Acknowledgment.}
The authors would like to be grateful to Professor Maryam Khosravi for her valuable comments regarding to eigenvalue inequalities. Also, we thank Professors Hiroyuki Osaka and Shuhei Wada for introducing the example contained in Remark \ref{4}. 

\bigskip

 \end{document}